\begin{document}
\theoremstyle{plain}
\newtheorem{thm}{Theorem}[section]
\newtheorem{prop}[thm]{Proposition}
\newtheorem{lemma}[thm]{Lemma}
\newtheorem{clry}[thm]{Corollary}
\newtheorem*{definition}{Definition}
\newtheorem{hyp}{Assumption}
\newtheorem{claim}{Claim}
\newtheorem*{CSHyp}{Carleson-Sj\"olin Assumption}

\theoremstyle{definition}
\newtheorem{remark}[thm]{Remark}
\numberwithin{equation}{section}
\newcommand{\eps}{\varepsilon}
\newcommand{\e}{\mathrm{e}}
\renewcommand{\phi}{\varphi}
\renewcommand{\d}{\partial}
\newcommand{\dd}{\mathrm{d}}
\newcommand{\re}{\mathop{\rm Re} }
\newcommand{\im}{\mathop{\rm Im}}
\newcommand{\R}{\mathbf{R}}
\newcommand{\T}{\mathbf{T}}
\renewcommand{\S}{\mathbf{S}}
\newcommand{\C}{\mathbf{C}}
\newcommand{\N}{\mathbf{N}} 
\newcommand{\Z}{\mathbf{Z}} 
\newcommand{\D}{C^{\infty}_0} 
\newcommand{\supp}{\mathop{\rm supp}}

\newcommand{\mBic}{\,\Leftrightarrow\,}         
\newcommand{\diam}{\mathrm{diam}}
\newcommand{\diag}{\mathrm{diag}}
\newcommand{\mker}{\mathrm{ker}}                
\newcommand{\mim}{\mathrm{im}}                  
\newcommand{\mdet}{\mathrm{det}}                
\newcommand{\tr}{\mathrm{tr}}
\newcommand{\adj}{\mathrm{adj}}
\newcommand{\mR}{\mathbf{R}}                    
\newcommand{\mC}{\mathbf{C}}                    
\newcommand{\mK}{\mathbf{K}}                    
\newcommand{\mZ}{\mathbf{Z}}                    
\newcommand{\mN}{\mathbf{N}}                    
\newcommand{\abs}[1]{\lvert #1 \rvert}          
\newcommand{\Bigabs}[1]{\Big\lvert #1 \Big\rvert}               
\newcommand{\norm}[1]{\lVert #1 \rVert}         
\newcommand{\ip}[2]{\langle #1, #2 \rangle}     
\newcommand{\br}[1]{\langle #1 \rangle}         
\newcommand{\mRS}{R_{S}}                        
\newcommand{\mCS}{C_{S}}                        

\newcommand{\tphi}{\tilde{\varphi}}
\newcommand{\hphi}{\hat{\varphi}}

\newcommand{\mD}{\mathscr{D}}
\newcommand{\mDn}{\mathscr{D}(\mR^n)}
\newcommand{\mDp}{\mathscr{D}^{\prime}}
\newcommand{\mDpn}{\mathscr{D}^{\prime}(\mR^n)}
\newcommand{\mE}{\mathscr{E}}
\newcommand{\mEn}{\mathscr{E}(\mR^n)}
\newcommand{\mEp}{\mathscr{E}^{\prime}}
\newcommand{\mEpn}{\mathscr{E}^{\prime}(\mR^n)}
\newcommand{\mS}{\mathscr{S}}
\newcommand{\mSn}{\mathscr{S}(\mR^n)}
\newcommand{\mSp}{\mathscr{S}^{\prime}}
\newcommand{\mSpn}{\mathscr{S}^{\prime}(\mR^n)}
\newcommand{\mB}{\mathscr{B}}
\newcommand{\mBn}{\mathscr{B}(\mR^n)}
\newcommand{\mBp}{\mathscr{B}^{\prime}}
\newcommand{\mBpn}{\mathscr{B}^{\prime}(\mR^n)}
\newcommand{\mOm}{\mathscr{O}_M}
\newcommand{\mOmn}{\mathscr{O}_M(\mR^n)}
\newcommand{\mOcp}{\mathscr{O}_C^{\prime}}
\newcommand{\mOcpn}{\mathscr{O}_C^{\prime}(\mR^n)}

\newcommand{\mU}{\mathscr{U}}
\newcommand{\mUn}{\mathscr{U}(\mR^n)}

\newcommand{\mP}{\mathscr{P}}
\newcommand{\mPn}{\mathscr{P}(\mR^n)}
\newcommand{\mPp}{\mathscr{P}^{\prime}}
\newcommand{\mPpn}{\mathscr{P}^{\prime}(\mR^n)}

\newcommand{\ms}{\mathscr{S}(\mZ^n)}
\newcommand{\msn}{\mathscr{S}(\mZ^n)}
\newcommand{\msp}{\mathscr{S}^{\prime}(\mZ^n)}
\newcommand{\mspn}{\mathscr{S}^{\prime}(\mZ^n)}
\newcommand{\mo}{\mathscr{O}(\mZ^n)}
\newcommand{\mon}{\mathscr{O}(\mZ^n)}

\newcommand{\mF}{\mathscr{F}}

\newcommand{\ehat}{\,\hat{\rule{0pt}{6pt}}\,}
\newcommand{\echeck}{\,\check{\rule{0pt}{6pt}}\,}
\newcommand{\etilde}{\,\tilde{\rule{0pt}{6pt}}\,}

\newcommand{\id}{\mathrm{Id}}
\newcommand{\cl}{\mathrm{cl}}

\newcommand{\nli}[1]{\lVert #1 \rVert_{L^{\infty}}}             
\newcommand{\xnli}[1]{\lVert \br{x} #1 \rVert_{L^{\infty}}}
\newcommand{\nlt}[1]{\lVert #1 \rVert_{L^{2}}}                  
\newcommand{\nltd}[1]{\lVert #1 \rVert_{L^2_{\delta}}}          
\newcommand{\nltdp}[1]{\lVert #1 \rVert_{L^2_{\delta+1}}}       

\newcommand{\mH}{\mathscr{H}}

\newcommand{\mdiv}{\mathrm{div}}
\newcommand{\mOp}{\mathrm{Op}}
\newcommand{\proj}{\mathrm{proj}}

\title[]{Determining an unbounded potential \\ from Cauchy data in admissible geometries}
\author[]{David Dos Santos Ferreira \and Carlos E.~Kenig \and Mikko Salo}
\address{Universit\'e Paris 13, Cnrs, Umr 7539 Laga, 99, avenue Jean-Baptiste Cl\'ement, F-93430 Villetaneuse, France}
\email{ddsf@math.univ-paris13.fr}
\address{Department of Mathematics, University of Chicago, 5734 University Avenue, Chicago, IL 60637-1514, USA}
\email{cek@math.uchicago.edu}
\address{Department of Mathematics and Statistics, University of Helsinki, PO Box 68, 00014 Helsinki, Finland}
\email{mikko.salo@helsinki.fi}

\date{March 30, 2011}

\begin{abstract}
   In \cite{DKSaU} anisotropic inverse problems were considered in certain admissible geometries, that is, on compact Riemannian manifolds with boundary 
    which are conformally embedded in a product of the Euclidean line and a simple manifold. In particular, it was proved that 
   a bounded smooth potential in a Schr\"odinger equation was uniquely determined by the Dirichlet-to-Neumann map in dimensions $n \geq 3$. 
   In this article we extend this result to the case of unbounded potentials, namely those in $L^{n/2}$. In the process, we derive $L^p$
   Carleman estimates with limiting Carleman weights similar to the Euclidean estimates of Jerison-Kenig \cite{JK} and Kenig-Ruiz-Sogge \cite{KRS}.
\end{abstract}
\maketitle
\setcounter{tocdepth}{1} 
\tableofcontents
%
%
\begin{section}{Introduction} \label{sec_intro}
%
In this paper we consider the problem of proving $L^p$ estimates for limiting Carleman weights on Riemannian manifolds, and the related inverse problem of recovering an $L^{n/2}$ potential from the Dirichlet-to-Neumann map (DN map) related to the Schr\"odinger equation. The main motivation comes from the inverse conductivity problem posed by Calder\'on \cite{calderon}. This problem asks to determine the conductivity function of a medium from electrical measurements made on its boundary.

In mathematical terms, if $\Omega \Subset \mR^n$ is the medium of interest having a positive conductivity coefficient $\gamma$, in the Calder\'on problem one considers the conductivity equation 
$$
\nabla \cdot \gamma \nabla u = 0 \quad \text{in } \Omega
$$
and defines the DN map by 
$$
\Lambda_{\gamma}: u|_{\partial \Omega} \mapsto \gamma \frac{\partial u}{\partial \nu}\Big|_{\partial \Omega}.
$$
This operator maps the voltage at the boundary to the current given by $\gamma$ times the normal derivative, which encodes the electrical measurements at the boundary. The inverse problem of Calder\'on asks to determine $\gamma$ from the knowledge of $\Lambda_{\gamma}$. This problem has been extensively studied and we refer to \cite{U_IP} for a recent survey.

The anisotropic Calder\'on problem considers the case where the conductivity $\gamma$ is a symmetric positive definite matrix instead of a scalar function. This corresponds to situations where the electrical properties of the medium depend on direction. The problem is open in general in dimensions $n \geq 3$, see \cite{DKSaU} for known results and more details. Following \cite{LeU} the problem may be recast as the determination of the metric $g$ on a compact Riemannian manifold $(M,g)$ with boundary from the corresponding DN map. 
In \cite{DKSaU} progress was made on the anisotropic Calder\'on problem in the following class of conformal smooth manifolds.

\begin{definition}
A compact Riemannian manifold $(M,g)$, with dimension $n \geq 3$ and with boundary $\partial M$, is called \emph{admissible} if $M \Subset \mR \times M_0$ for some $(n-1)$-dimensional simple manifold $(M_0,g_0)$, and if $g = c(e \oplus g_0)$ where $e$ is the Euclidean metric on $\mR$ and $c$ is a smooth positive function on $M$.
\end{definition}

Here, a compact manifold $(M_0,g_0)$ with boundary is \emph{simple} if for any $p \in M_0$ the exponential map $\exp_p$ with its maximal domain of definition is a diffeomorphism onto $M_0$, and if $\partial M_0$ is strictly convex (that is, the second fundamental form of $\partial M_0 \hookrightarrow M_0$ is positive definite).

In \cite{DKSaU} it was proved that a Riemannian metric in a conformal class of admissible geometries is uniquely determined by the DN map. This was obtained as a corollary of a result for the Schr\"odinger equation in a fixed admissible manifold, stating that a bounded smooth potential $q$ is determined by the corresponding DN map. In \cite{DKSaU} all coefficients were assumed infinitely differentiable. In this paper we relax this requirement and show that a complex potential $q \in L^{n/2}(M)$ is determined by the DN map.

To state the main result, assume that $(M,g)$ is a compact Riemannian manifold with smooth boundary $\partial M$, and let $\Delta_g$ be the Laplace-Beltrami operator. Given a complex function $q \in L^{n/2}(M)$, where $n \geq 3$ is the dimension of the manifold  $M$, we consider the Dirichlet problem 
\begin{equation*} 
(-\Delta_g + q)u = 0 \text{ in } M, \quad u|_{\partial M} = f.
\end{equation*}
We assume throughout that $0$ is not a Dirichlet eigenvalue for this problem, and then standard arguments (see Appendix \ref{appendix_wellposedness}) show that there is a well-defined DN map 
$$
\Lambda_{g,q}: H^{1/2}(\partial M) \to H^{-1/2}(\partial M), \ \ f \mapsto \partial_{\nu} u|_{\partial M}.
$$

The following uniqueness theorem is the main result for the inverse problem. (The assumption $q \in L^{n/2}$ may be considered optimal in the context of the standard wellposedness theory for the Dirichlet problem with $L^p$ potentials, and also for the strong unique continuation principle to hold \cite{JK}.)

\begin{thm} \label{theorem_main}
Let $(M,g)$ be admissible and let $q_1, q_2$ be complex functions in $L^{n/2}(M)$. If $\Lambda_{g,q_1} = \Lambda_{g,q_2}$, then $q_1 = q_2$.
\end{thm}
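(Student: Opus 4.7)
The plan is to follow the Complex Geometric Optics (CGO) strategy for Calder\'on-type problems, adapted to the admissible geometry and to the low regularity of the potential. First I would establish a boundary integral identity: assuming $0$ is not a Dirichlet eigenvalue for either $-\Delta_g + q_j$ and using the well-posedness theory deferred to the appendix, the equality $\Lambda_{g,q_1} = \Lambda_{g,q_2}$ implies
\begin{equation*}
\int_M (q_1 - q_2)\, u_1 u_2 \, dV_g = 0
\end{equation*}
for every pair $u_j \in H^1(M) \cap L^{2n/(n-2)}(M)$ with $(-\Delta_g + q_j)u_j = 0$ in $M$. The integrability $u_j \in L^{2n/(n-2)}$ is the natural one, since by H\"older's inequality it pairs with $q_j \in L^{n/2}$ to produce an $L^1$ integrand and matches the Sobolev embedding exponent in dimension $n$. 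The heart of the proof is then to produce a large enough family of such solutions to conclude $q_1 - q_2 = 0$.

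To construct CGO solutions, I would exploit the admissible structure $g = c(e \oplus g_0)$ with $x_1$ the Euclidean coordinate. Seeking solutions of the form
\begin{equation*}
u = c^{-(n-2)/4} e^{-s(x_1 + i\psi)} \bigl( a(x_1, x') + r \bigr), \qquad s = \tau + i\lambda,
\end{equation*}
one reduces the problem to an eikonal equation for $\psi$ (solvable on simple $M_0$ by the Carleson-Sj\"olin type geometry), a transport equation for the principal amplitude $a$, and a correction equation for $r$ which must be solved with $\|r\|_{L^{2n/(n-2)}} \to 0$ as $\tau \to \infty$. Solving the correction equation in the presence of an $L^{n/2}$ potential is precisely where the $L^p$ Carleman estimates of the paper enter: I would apply an estimate of the form
\begin{equation*}
\| v \|_{L^{2n/(n-2)}(M)} \leq C\, \tau^{-\alpha} \| e^{\tau x_1} (-\Delta_g + q) e^{-\tau x_1} v \|_{L^{2n/(n+2)}(M)}
\end{equation*}
with limiting Carleman weight $x_1$ and a Jerison-Kenig/Kenig-Ruiz-Sogge type dual pair $(2n/(n+2), 2n/(n-2))$. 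Once such an estimate is available, the $L^{n/2}$ potential is absorbed by H\"older into the Carleman estimate, and the Hahn-Banach/duality argument produces the desired solvability and smallness of $r$.

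With CGO solutions in hand for both $q_1$ and $q_2$, I would substitute them into the integral identity and pass to the limit $\tau \to \infty$. The oscillation in $x_1$ survives through the factor $e^{-2i\lambda x_1}$ while the error $r$ drops out by the Carleman-based smallness, leaving an identity of the form
\begin{equation*}
\int_M (q_1 - q_2)\, c^{-(n-2)/2}\, a_1 a_2 \, e^{-2i\lambda x_1} \, dV_g = 0
\end{equation*}
for a rich family of transversal amplitudes $a_1, a_2$ built from Gaussian beams or WKB-type quasimodes concentrating along geodesics of $(M_0, g_0)$. Taking the Fourier transform in $x_1$ and using arbitrary geodesics on $M_0$ reduces the problem to showing that $q_1 - q_2$ (extended by zero to a cylinder over $M_0$) has vanishing geodesic ray transform on $M_0$ for every frequency $\lambda$, and the injectivity of the geodesic ray transform on simple manifolds then forces $q_1 = q_2$.

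The principal obstacle is the derivation of the $L^p$ Carleman estimate with the limiting weight on a manifold, together with the accompanying low-regularity well-posedness theory. Unlike the Euclidean case of \cite{JK, KRS}, the conjugated operator on $(M,g)$ is not a constant-coefficient perturbation of the Laplacian, and the product/conformal structure of the admissible class must be used carefully to decouple the tangential $(x_1)$ and transversal $(M_0)$ directions before deploying spectral or oscillatory-integral tools to recover the sharp $L^p \to L^{p'}$ gain. I would anticipate that proving this estimate requires treating the conformal factor $c$ and the semiclassical spectral projections of $-\Delta_{g_0}$ via a Carleson-Sj\"olin/Stein-Tomas type argument on the simple manifold $M_0$, which is the truly new technical input needed beyond the framework of \cite{DKSaU}.
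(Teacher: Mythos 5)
Your overall architecture (integral identity, CGO solutions with the limiting weight $x_1$, passage to a ray transform on $M_0$) matches the paper, but two of your key steps would fail as stated. First, the Carleman estimate you invoke,
\begin{equation*}
\| v \|_{L^{2n/(n-2)}(M)} \leq C\, \tau^{-\alpha} \| e^{\tau x_1} (-\Delta_g + q) e^{-\tau x_1} v \|_{L^{2n/(n+2)}(M)},
\end{equation*}
with a gain $\tau^{-\alpha}$, $\alpha>0$, is not available: the pair $(2n/(n+2),\,2n/(n-2))$ is exactly the Sobolev-dual pair, the Jerison--Kenig/Kenig--Ruiz--Sogge estimates are scale invariant, and the bound the paper actually proves (Proposition \ref{claim_gtau}) is $\norm{G_{\tau} f}_{L^{2n/(n-2)}} \leq C_0 \norm{f}_{L^{2n/(n+2)}}$ with $C_0$ independent of $\tau$ but with \emph{no decay}. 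Consequently your claim that $\norm{r}_{L^{2n/(n-2)}}\to 0$ cannot be obtained this way, and without it the error terms $\int_M q\,(a_1 r_2 + a_2 r_1 + r_1 r_2)\,\dd V$ do not obviously vanish, since $q\in L^{n/2}$ only pairs with $L^{2n/(n-2)}$ products. The paper's workaround (the Lavine--Nachman argument) is the real content here: one proves $\norm{r_j}_{L^{2n/(n-2)}} = O(1)$ together with $\norm{r_j}_{L^2} = o(1)$ (the latter coming from the non-scale-invariant $L^2$ bound $\norm{G_\tau}_{L^2\to L^2}\lesssim \abs{\tau}^{-1}$), and then splits $q = q^{\sharp}+q^{\flat}$ with $q^{\sharp}\in L^{\infty}$ and $\norm{q^{\flat}}_{L^{n/2}}\leq\eps$, estimating the $q^{\sharp}$ part via the $L^2$ smallness and the $q^{\flat}$ part via the $L^{2n/(n-2)}$ boundedness. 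The same splitting is needed already to close the Neumann series defining $r$ (Lemma \ref{lemma_m1_gtau_m2}), since $\norm{m_1 G_\tau m_2}_{L^2\to L^2}$ is bounded but not small without it.

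Second, your endgame is too quick in two respects. The phases of the two solutions contribute not only the oscillation $e^{-2i\lambda x_1}$ but also a real exponential $e^{-\lambda r}$ in the transversal variable, so what vanishes is the \emph{attenuated} geodesic ray transform of the partial Fourier transform $f_\lambda(r,\theta)=\int e^{i\lambda x_1}q\,\dd x_1$, with attenuation proportional to $\lambda$; injectivity of this transform on simple manifolds of dimension $\geq 3$ is only known for small attenuation, which forces one to restrict to small $\abs{\lambda}$ and then recover all frequencies by the Paley--Wiener theorem (the Fourier transform of the compactly supported $L^1$ function $q(\cdot,r,\theta)$ is entire). More seriously, $f_\lambda$ is only in $L^1(M_0)$, so its restriction to individual geodesics is undefined and one cannot concentrate the angular weight $b(\theta)$ to a delta function to reduce to the pointwise vanishing of the ray transform. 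The paper handles this by duality: the hypothesis gives $\int_{M_0} f\, T_\lambda^* h\,\dd V = 0$ for all admissible $h$, hence $T_\lambda^* T_\lambda f = 0$, and ellipticity of the normal operator (a pseudodifferential operator of order $-1$) upgrades $f$ to a smooth function, after which the known injectivity result applies. Your proposal needs both of these ingredients to go through.
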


In the case where $M$ is a bounded domain in $\mR^n$ and $g$ is the Euclidean metric, this result is due to Lavine and Nachman \cite{LN} following the earlier result of Jerison and Kenig for $q_j \in L^{n/2+\eps}(M)$ for some $\eps > 0$ (see Chanillo \cite{Cha} for an account and also for a related result with $q_j$ in a Fefferman-Phong class with small norm). As mentioned above, if $q$ is a smooth function on an admissible manifold $M$ this result was proved in \cite{DKSaU} by using $L^2$ Carleman estimates. In fact, smoothness of $q$ is not essential, and by inspecting the proof of \cite{DKSaU} the uniqueness result can be extended to bounded continuous $q$ (with the complex geometrical optics construction in the proof going through for $q \in L^n(M)$). However, the proof for $q \in L^{n/2}$ requires to replace the $L^2$ Carleman estimates in \cite{DKSaU} with corresponding $L^p$ Carleman estimates.

The other main result in this paper is a $L^p$ Carleman estimate for limiting Carleman weights on Riemannian manifolds. The concept of limiting Carleman weights was introduced in \cite{ksu} as part of a general procedure for producing special complex geometrical optics solutions to elliptic equations, with applications to inverse problems. We refer to \cite{DKSaU} for a precise definition and more careful analysis of limiting Carleman weights, also on Riemannian manifolds. For present purposes it is sufficient to mention that the existence of a limiting Carleman weight on $(M,g)$ in dimensions $n \geq 3$ is locally equivalent with the manifold being admissible, and that typical limiting Carleman weights in $\mR^n$, $n \geq 3$, include the linear weight $\varphi(x) = x_1$ and logarithmic weight $\varphi(x) = \log\,\abs{x}$.

The last two weights are featured in the literature of Carleman estimates and unique continuation, in particular in the scale invariant $L^p$ Carleman estimates of Kenig-Ruiz-Sogge \cite{KRS} for the linear weight and of Jerison-Kenig \cite{JK} for the logarithmic weight. We prove an analogue of these estimates on more general Riemannian manifolds. Note that the existence of a limiting Carleman weight requires at least locally a product structure on the manifold, and therefore the following result is stated for the linear weight on a product manifold. The result, in the case when the manifold $(M_0,g_0)$ below is the standard 
$n-1$ dimensional torus, is due to Shen \cite{Shen}.

\begin{thm}
\label{Carl:LpCarlThm}
Let $(M_0,g_0)$ be an $(n-1)$-dimensional compact manifold without boundary, and equip $\R \times M_0$ with the metric $g = e \oplus g_0$ where $e$ is the Euclidean metric. The Euclidean coordinate is denoted by $x_1$. For any compact interval $I \subseteq \R$ there exists a constant $C_I > 0$ such that if $\abs{\tau} \geq 4$ and 
         $$ \tau^2 \notin {\rm Spec}(-\Delta_{g_{0}}) $$
    then we have
    \begin{align*}
         \| \e^{\tau x_{1}} u\|_{L^{\frac{2n}{n-2}}(\mR \times M_0)} \leq C_I \| \e^{\tau x_{1}} \Delta_g u\|_{L^{\frac{2n}{n+2}}(\mR \times M_0)} 
    \end{align*}
    when $u \in C^{\infty}_{0}(I \times M_{0})$.
\end{thm}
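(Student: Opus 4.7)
Setting $v = e^{\tau x_1} u$, the estimate is equivalent to
\[
\|v\|_{L^{p'}(\R \times M_0)} \leq C_I \|P_\tau v\|_{L^p(\R \times M_0)},
\]
with $p = 2n/(n+2)$, $p' = 2n/(n-2)$, and conjugated Laplacian
\[
P_\tau = e^{\tau x_1} \Delta_g e^{-\tau x_1} = (\partial_{x_1} - \tau)^2 + \Delta_{g_0}.
\]
It is enough to construct a right inverse $G_\tau$ of $P_\tau$ that sends $L^p(\R \times M_0)$ boundedly into $L^{p'}(\R \times M_0)$ with norm at most $C_I$ on inputs supported in $I \times M_0$, since then the Carleman inequality follows from $v = G_\tau(P_\tau v)$. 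The plan is to build $G_\tau$ by diagonalizing in $M_0$: with $\{\phi_k\}_{k \geq 0}$ an $L^2(M_0)$-orthonormal basis of eigenfunctions of $-\Delta_{g_0}$ with eigenvalues $\mu_k^2$, the equation separates mode-by-mode into the 1D ODE
\[
(\partial_{x_1} - \tau - \mu_k)(\partial_{x_1} - \tau + \mu_k) v_k = f_k.
\]

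Each of these is solved explicitly by convolution with a Green's function built from the exponentials $e^{(\tau \pm \mu_k)x_1}$, with branches chosen to decay at $\pm \infty$ whenever the relevant exponent has the right sign. Three regimes appear: when $\mu_k \gg |\tau|$, both factors are coercive and the Green's function is well-localized near the diagonal with a gain of $\mu_k^{-1}$; when $\mu_k \ll |\tau|$, the Green's function may grow like $e^{|\tau| x_1}$, but on the compact interval $I$ this growth is uniformly bounded; the delicate regime is the resonant one $\mu_k \approx |\tau|$, where the hypothesis $\tau^2 \notin \mathrm{Spec}(-\Delta_{g_0})$ rules out a genuine singularity but provides no uniform lower bound on $|\tau^2 - \mu_k^2|$.

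To upgrade these pointwise formulas to a genuine $L^p \to L^{p'}$ bound on the product, I would perform dyadic spectral decompositions in both variables: group the modes $\phi_k$ into clusters $\lambda \leq \mu_k < 2\lambda$ and apply a matching Littlewood--Paley decomposition in the $x_1$-frequency. On each off-resonant dyadic block the operator closely resembles the Euclidean resolvent $(-\Delta - z)^{-1}$ on $\R^n$ with $z$ of size $\lambda^2$, and one can combine the Kenig--Ruiz--Sogge uniform Sobolev inequality \cite{KRS} with Sogge's spectral cluster estimates, of the form $\|\chi_{[\lambda, 2\lambda]} h\|_{L^{p'}(M_0)} \lesssim \lambda^{\sigma} \|h\|_{L^2(M_0)}$ together with their $L^p \to L^2$ duals, to control that block. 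The operator norms of off-resonant pieces decay geometrically in the distance from the resonant shell and sum up.

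The main obstacle is the resonant region $\mu_k \approx |\tau|$, where neither the spectral gap nor the Sogge bounds offer enough room; here one has to exploit the compact support of $v$ in $x_1 \in I$, in the spirit of Jerison--Kenig \cite{JK}: the resonant 1D Green's functions behave like principal-value or logarithmic kernels, and H\"older's inequality over the finite interval $I$ buys back the missing decay, producing the dependence of $C_I$ on $I$. The passage from Shen's torus setting to a general compact $(M_0, g_0)$ goes through because Sogge's spectral cluster estimates replace Plancherel on the torus exactly where it is needed, any loss in sharpness being absorbed by the compactness in $x_1$.
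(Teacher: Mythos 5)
Your overall strategy---separate variables, invert the conjugated operator mode by mode with explicit one-dimensional Green's functions, and trade Plancherel on the torus for Sogge's spectral cluster estimates---is exactly the route the paper takes (following Jerison and Sogge). However, the step on which everything hinges, the resonant clusters $\mu_k \approx \abs{\tau}$, is left in a state where the argument as written does not close.

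You propose to handle the resonant region by observing that the 1D Green's functions ``behave like principal-value or logarithmic kernels'' and that H\"older over $I$ buys back the missing decay. There are two problems. First, the characterization is wrong: for $\tau,\mu>0$ with $\tau\neq\mu$, the partial-fraction identity
\[
\frac{1}{(i\eta-(\tau+\mu))(i\eta-(\tau-\mu))}=\frac{1}{2\mu}\Big[\frac{1}{i\eta-(\tau+\mu)}-\frac{1}{i\eta-(\tau-\mu)}\Big]
\]
shows that the 1D kernel $m_\tau(t,\mu)$ satisfies $\abs{m_\tau(t,\mu)}\leq \mu^{-1}\e^{-\abs{\tau-\mu}\abs{t}}$; it is a \emph{bounded} kernel of size $\mu^{-1}$, uniformly in the distance $\abs{\tau-\mu}$ to the spectrum (no quantitative spectral gap is needed, only $\tau\neq\mu$ so that a bounded branch of each first-order factor exists). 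A genuine principal-value kernel $\sim 1/t$ would \emph{not} be rescued by H\"older over $I$: the admissible convolution kernel for $L^{2n/(n+2)}(\R)\to L^{2n/(n-2)}(\R)$ is $\abs{t}^{-1+2/n}$, and $\abs{t}^{-1}$ is not even locally integrable. Second, and more importantly, H\"older over $I$ only cures the lack of $t$-decay; it does nothing about the loss in the spectral variable. Passing through the cluster estimates $L^{2n/(n+2)}(M_0)\to L^2(M_0)$ and $L^2(M_0)\to L^{2n/(n-2)}(M_0)$ costs $(1+k)^{1-2/n}$, which for the resonant cluster $k\approx\abs{\tau}$ diverges as $\abs{\tau}\to\infty$; the resonant block is controlled only because the $\mu^{-1}\approx\abs{\tau}^{-1}$ gain from the Green's function beats this loss, leaving $\abs{\tau}^{-2/n}\lesssim 1$. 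This cancellation is the crux of the proof and is absent from your sketch (you record the $\mu_k^{-1}$ gain only in the regime $\mu_k\gg\abs{\tau}$). Relatedly, the dyadic grouping $\lambda\leq\mu_k<2\lambda$ is too coarse near resonance, since within the block containing $\abs{\tau}$ the decay rate $\abs{\tau-\mu_k}$ ranges from $0$ to $\sim\abs{\tau}$; one should work with the unit-width clusters for which Sogge's estimates are stated, sum $(1+k)^{1-2/n}\,k^{-1}\e^{-\abs{\tau-k}\abs{t}}$ over $k$ to obtain the $x_1$-kernel $1+\abs{t}^{-1+2/n}$, and finish with Hardy--Littlewood--Sobolev rather than with Littlewood--Paley in $x_1$ and the Kenig--Ruiz--Sogge inequality, which is not directly available once $M_0$ is a general compact manifold.
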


The proof of the $L^2$ Carleman estimates for limiting Carleman weights in \cite{DKSaU} is based on integration by parts and cannot be used in the $L^p$ setting. However, in \cite{KSaU} another proof of the $L^2$ Carleman estimate is given; this proof is based on Fourier analysis and gives an explicit inverse for the conjugated Laplacian. We will derive the $L^p$ bounds from this explicit inverse operator. This follows the proof of the $L^p$ Carleman estimate of Jerison-Kenig \cite{JK} using Jerison's approach \cite{Jerison}, \cite[Section 5.1]{Sogge} based on the spectral cluster estimates of Sogge \cite{Sogge}. Finally, if one allows strongly pseudoconvex Carleman weights then much stronger estimates are available (see for instance \cite{KT_nonsmooth,KT}), however for the applications to inverse problems it seems necessary to restrict to limiting Carleman weights.

\begin{remark}
The above theorems are in the setting of (conformal) product manifolds. However, the results also apply to warped products. If $f: \R \to \R$ is a smooth function and $(M_0,g_0)$ is an $(n-1)$-dimensional manifold, the warped product $\R \times_{\e^{2f}} M_{0}$ is the manifold $M=\R \times M_{0}$ endowed with the metric
$$
g(x_1,x') = \left( \begin{array}{cc} 1 & 0 \\ 0 & \e^{2f(x_1)} g_0(x') \end{array} \right).
$$
We choose coordinates $y_1 = \eta(x_1)$, $y' = x'$ for a suitable smooth strictly increasing function $\eta$. In fact, if 
$$
\eta(t) = \int_0^t \e^{-f(s)} \,\dd s
$$
then $\eta'(t)^{-2} = e^{2f(t)}$ and the metric in $y$ coordinates becomes a conformal multiple of a product metric, 
$$
g(y_1,y') = \e^{2f(\eta^{-1}(y_1))} \left( \begin{array}{cc} 1 & 0 \\ 0 & g_0(y') \end{array} \right).
$$
Warped products have a natural limiting Carleman weight $\varphi(y) = y_1$, and Theorem \ref{theorem_main} remains true in conformal multiples of warped products whenever $(M_0,g_0)$ is a simple manifold.
\end{remark}

The paper is organized as follows. Section \ref{sec_intro} is the introduction. In Section \ref{sec_Lp_carleman} we prove the $L^p$ Carleman estimate complemented with the usual $L^2$ Carleman estimates. Section \ref{sec_cgo} presents the construction of complex geometrical optics solutions for Schr\"odinger equations with $L^{n/2}$ potentials in admissible geometries. The proof of Theorem \ref{theorem_main} is contained in Section \ref{sec_proof_main}, modulo a uniqueness result for an analogue of the attenuated geodesic ray transform acting on unbounded functions. This last result has a different character than the rest of the proof, and it is therefore established separately in Section \ref{sec_raytransform}. There are two appendices concerning the wellposedness of the Dirichlet problem and the normal operator for the attenuated ray transform.

\subsection*{Acknowledgements}

The last named author would like to thank Adrian Nachman for explaining his unpublished argument with Richard Lavine \cite{LN} which proves a uniqueness result for $L^{n/2}$ potentials in Euclidean space. C.K. is supported partly by NSF grant DMS-0968472, and M.S. is supported in part by the Academy of Finland.
D. DSF. would like to thank the University of Chicago for its hospitality.

\end{section}
%
%
\begin{section}{$L^p$ Carleman estimates} \label{sec_Lp_carleman}
The aim of this section is to prove Theorem \ref{Carl:LpCarlThm}, which is an analogue of the $L^p$ Carleman estimates obtained in the Euclidean case by Jerison and Kenig \cite{JK}
(for logarithmic weights) or by Kenig, Ruiz and Sogge \cite{KRS} (for linear weights). In fact, we prove a more general result which implies Theorem \ref{Carl:LpCarlThm} by taking $f = \e^{\tau x_1} \Delta_g \e^{-\tau x_1} u$ for $u \in C^{\infty}_0(I \times M_0)$. The case when $(M_0,g_0)$ is the standard 
$n-1$ dimensional torus is due to Shen \cite{Shen}.

\begin{prop} \label{claim_gtau}
Let $I \subseteq \R$ be a compact interval and $(M_0,g_0)$ a compact $(n-1)$-dimensional manifold without boundary. Equip $N = I \times M_0$ with the product metric $g = e \oplus g_0$. For $\abs{\tau} \geq 4$ with $\tau^2 \notin \text{Spec}(-\Delta_{g_0})$, there is a linear operator $G_{\tau}: L^2(N) \to H^2(N)$ such that 
\begin{eqnarray*}
 & \e^{\tau x_1} (-\Delta_g) \e^{-\tau x_1} G_{\tau} v = v & \text{for } v \in L^2(N), \\
 & G_{\tau} \e^{\tau x_1} (-\Delta_g) \e^{-\tau x_1} v = v & \text{for } v \in C^{\infty}_0(N^{\text{int}}).
\end{eqnarray*}
This operator satisfies 
\begin{align*}
\norm{G_{\tau} f}_{L^2(N)} &\leq \frac{C_0}{\abs{\tau}} \norm{f}_{L^2(N)}, \\
\norm{G_{\tau} f}_{H^1(N)} &\leq C_0 \norm{f}_{L^2(N)}, \\
\norm{G_{\tau} f}_{L^{\frac{2n}{n-2}}(N)} &\leq C_0 \norm{f}_{L^{\frac{2n}{n+2}}(N)},
\end{align*}
where $C_0$ is independent of $\tau$ (but may depend on $I$).
\end{prop}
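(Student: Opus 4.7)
Since $(M_0,g_0)$ is compact without boundary, $-\Delta_{g_0}$ admits a complete orthonormal eigenbasis $\{\phi_k\}_{k\geq 0}$ in $L^2(M_0)$ with eigenvalues $0 \leq \lambda_0 \leq \lambda_1 \leq \cdots$. Expanding $u(x_1,x') = \sum_k u_k(x_1)\phi_k(x')$ and likewise for $v$, the conjugated Laplacian $P_\tau := \e^{\tau x_1}(-\Delta_g)\e^{-\tau x_1} = -\partial_{x_1}^2 + 2\tau\partial_{x_1} - \tau^2 - \Delta_{g_0}$ decouples modewise into the one-dimensional operators $L_k := -\partial_{x_1}^2 + 2\tau\partial_{x_1} + (\lambda_k - \tau^2)$, whose Fourier symbols $p_k(\xi) = (\xi + i\tau)^2 + \lambda_k$ have no real zero precisely because $\tau^2 \notin \mathrm{Spec}(-\Delta_{g_0})$. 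The plan is to set $G_\tau v := \sum_k G_\tau^{(k)}(v_k)\phi_k$, where each $G_\tau^{(k)}$ is realized as Fourier multiplication by $m_k := 1/p_k$ along a contour $\Gamma_k \subset \C$, the two-sided inverse property then following modewise from the identity $p_k\,m_k \equiv 1$.

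By Parseval in $x_1$ and orthonormality of $\{\phi_k\}$, the $L^2$ and $H^1$ estimates reduce to uniform pointwise bounds $|\tau m_k|,\,|\xi m_k|,\,|\sqrt{\lambda_k}\,m_k| \leq C$ along $\Gamma_k$. The factorization $|p_k(\xi)|^2 = (\xi^2 + (\sqrt{\lambda_k}-\tau)^2)(\xi^2 + (\sqrt{\lambda_k}+\tau)^2)$ shows that the real axis $\Gamma_k = \R$ already suffices unless $\sqrt{\lambda_k}$ lies within distance $O(1)$ of $|\tau|$; for such "resonant" modes I would shift $\Gamma_k$ off the real axis by an $O(1)$ amount into the half-plane opposite to the pole $\xi = i(\sqrt{\lambda_k} - \tau)$, after which a direct computation yields $|p_k| \gtrsim |\tau|$ uniformly along $\Gamma_k$. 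Since the shift is bounded and $I$ is compact, the exponential weights introduced by the contour shift contribute only constants depending on $I$, keeping the bounds uniform in $\tau$.

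The substantive content is the $L^{2n/(n+2)} \to L^{2n/(n-2)}$ bound, which I would attack by the spectral-cluster method of Jerison \cite{Jerison} and Sogge \cite[Chapter~5]{Sogge} underlying the Kenig-Ruiz-Sogge \cite{KRS} and Jerison-Kenig \cite{JK} estimates. Group the modes into dyadic shells $C_\mu = \{k : \sqrt{\lambda_k} \in [\mu,\mu+1)\}$ with $L^2(M_0)$-projectors $\Pi_\mu$. Since $M_0$ has dimension $n-1$, the exponent $p = 2n/(n-2)$ is precisely the Tomas-Stein exponent $2((n-1)+1)/((n-1)-1)$ for $M_0$, and Sogge's spectral cluster theorem furnishes $\norm{\Pi_\mu f}_{L^p(M_0)} \lesssim (1+\mu)^{(n-2)/(2n)} \norm{f}_{L^2(M_0)}$ together with its dual. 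Because $G_\tau$ preserves each cluster, it suffices to bound $\Pi_\mu G_\tau \Pi_\mu : L^{p'}(N) \to L^p(N)$ uniformly in $\mu$ and $\tau$; this follows by pairing Sogge's transverse estimate on $M_0$ with a one-dimensional $L^{p'}(\R) \to L^p(\R)$ convolution bound for the multipliers $m_k$ (whose kernels split into two shifted exponentials by partial fractions and admit Young-type estimates on $\Gamma_k$), and then summing dyadically in $\mu$. The main obstacle is the resonant regime $\mu \sim |\tau|$: both factors sit at their most critical size, so the precise exponent $(n-2)/(2n)$ in Sogge's estimate must exactly balance the gain from the one-dimensional multiplier, in direct analogy with the restriction-theoretic cancellations in the Euclidean Kenig-Ruiz-Sogge arguments.
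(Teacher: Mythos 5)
Your overall strategy is the same as the paper's: decouple into transversal eigenmodes, invert the resulting one-dimensional operators by explicit Fourier multipliers $1/((\xi+i\tau)^2+\lambda_k)$ whose kernels split by partial fractions into shifted exponentials, and combine with Sogge's spectral cluster estimates at the exponent $\tfrac12-\tfrac1n=\tfrac{n-2}{2n}$ (which you correctly identify as the Stein--Tomas exponent for the $(n-1)$-dimensional factor). Your treatment of the $L^2$ and $H^1$ bounds by an $O(1)$ contour shift for the resonant modes is a legitimate variant of what the paper does: the paper instead quotes the weighted estimates of \cite[Proposition 4.1]{KSaU} on the full cylinder and then cuts off in $x_1$; both mechanisms use the compactness of $I$ in the same essential way (your contour shift is equivalent to exploiting the kernel bound $\abs{m_\tau(t,\mu)}\leq \mu^{-1}\e^{-\abs{\tau-\mu}\abs{t}}$ of Lemma \ref{lemma:mtau} on a bounded $t$-range).

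The gap is in the assembly of the $L^{2n/(n+2)}\to L^{2n/(n-2)}$ bound. You reduce to bounding $\Pi_\mu G_\tau \Pi_\mu$ \emph{uniformly} in $\mu$ and then propose to ``sum dyadically,'' but your shells $[\mu,\mu+1)$ are unit-width, there are infinitely many of them, and there is no $L^{p'}\to L^p$ orthogonality to exploit; uniform bounds simply do not sum. Worse, the natural per-cluster bound obtained by pairing the two cluster estimates with Young's inequality for the kernel $\mu^{-1}\e^{-\abs{\tau-\mu}\abs{t}}$ gives $(1+\mu)^{1-\frac{2}{n}}\mu^{-1}\abs{\tau-\mu}^{-1+\frac{2}{n}}$, which for $\mu\gg\tau$ behaves like $\mu^{-1}$ and therefore sums to a divergent (logarithmic) series over the clusters; it also degenerates on the resonant cluster where $\abs{\tau-\mu}$ may be arbitrarily small. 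The paper's proof reverses the order of operations: it first sums the one-dimensional kernels \emph{pointwise in $t$} over all clusters, using the exponential decay $\e^{-\abs{\tau-\mu}\abs{t}}$ to make the $\mu$-sum converge to the bound $1+\abs{t}^{-1+\frac{2}{n}}$ of \eqref{Carl:SeriesEst} (the resonant cluster contributing only $\tau^{-2/n}\lesssim 1$, which is harmless on the compact interval $I$), and only then integrates in $x_1$ by the Hardy--Littlewood--Sobolev inequality. The use of Hardy--Littlewood--Sobolev rather than Young is forced: $\abs{t}^{-1+\frac{2}{n}}$ lies only in weak $L^{\frac{n}{n-2}}$, even on a compact interval, so the endpoint Young inequality you would need fails. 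Your outline contains all the right ingredients, but without this ``sum the kernels first, then apply fractional integration'' step the argument does not close.
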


\begin{remark}
     In the Euclidean case, $L^p$ Carleman estimates with linear weights can be obtained from $L^p$ Carleman estimates with 
     pseudoconvex Carleman weights by scaling. Indeed, suppose that the following Carleman estimate
          $$  \big\| \e^{\tau (x_{1}+x_{1}^2/2\eps)} u \big\|_{L^{\frac{2n}{n-2}}(\R^n)} \leq 
                C_{K}  \big\| \e^{\tau (x_{1}+x_{1}^2/2\eps)} \Delta u\|_{L^{\frac{2n}{n+2}}(\R^n)}, $$
     holds for all $\eps \leq \eps_{0}$ and all $u \in \D(K)$, then applying this estimate to $u_{\mu}=u(\mu \, \cdot)$ with $\mu \geq 1$ and $u \in \D(K)$, one gets
          $$  \big\|\e^{\frac{\tau}{\mu} x_{1}+\frac{\tau}{\mu^2} x_{1}^2/2\eps} u\big\|_{L^{\frac{2n}{n-2}}(\R^n)} 
                \leq C_{K}  \big\|\e^{\frac{\tau}{\mu} x_{1}+\frac{\tau}{\mu^2} x_{1}^2/2\eps} \Delta u\big\|_{L^{\frac{2n}{n+2}}(\R^n)}. $$
     Choosing $\mu=\sqrt{\tau}$, and using the fact that $\e^{x_{1}^2/2\eps} \simeq C_{\eps}$ on $K$, one gets the
     Carleman estimate
           $$  \| \e^{\mu x_{1}} u\|_{L^{\frac{2n}{n-2}}(\R^n)} \leq C_{K,\eps}  \|  \e^{\mu x_{1}} \Delta u\|_{L^{\frac{2n}{n+2}}(\R^n)}, $$
     for all $u \in \D(K)$. However, in the anisotropic case, one has to find another way.
\end{remark}

To prepare for the proof of Proposition \ref{claim_gtau} consider the Laplace-Beltrami operator on $N$,
\begin{align*}
     P = \Delta_{g}=\d_{x_{1}}^2+\Delta_{g_{0}}
\end{align*}
and the corresponding conjugated operator (by the limiting Carleman weight $x_{1}$)
\begin{align}
\label{Carl:ConjOp}
     \e^{\tau x_{1}}P\e^{-\tau x_{1}} = \d_{x_{1}}^2 - 2 \tau \d_{x_{1}} + \tau^2 + \Delta_{g_{0}}.
\end{align}

We denote by $\lambda_{0}=0<\lambda_{1} \leq \lambda_{2} \leq \dots$ the sequence of
eigenvalues of $-\Delta_{g_{0}}$ on $M_{0}$ and $(\psi_{j})_{j \geq 0}$ the corresponding sequence of eigenfunctions forming an orthonormal basis of $L^2(M_0)$, 
\begin{align*}
     -\Delta_{g_{0}}\psi_{j} = \lambda_{j} \psi_{j}.
\end{align*}
We denote by $\pi_{j} : L^2(M_{0}) \to L^2(M_{0}), u \mapsto (u,\psi_j) \psi_j$ the projection on the linear space spanned by the eigenfunction $\psi_{j}$ so that 
\begin{align*}
     \sum_{j=0}^{\infty} \pi_{j} = {\rm Id}, \quad \sum_{j=0}^{\infty} \lambda_{j} \pi_{j} = -\Delta_{g_{0}}
\end{align*}
and by 
     $$ \widehat{u}(j)= \int_{M_{0}} u \, \overline{\psi_{j}} \, \dd V_{g_{0}} $$ 
the corresponding Fourier coefficients of a function $u$ on $M_{0}$. We define the spectral clusters as
\begin{align}
     \chi_{k} = \sum_{k \leq \sqrt{\lambda_{j}} < k+1} \pi_{j}, \quad k \in \N.
\end{align}
Note that these are projection operators, $\chi_{k}^2=\chi_{k}$, and they constitute a decomposition of the identity
\begin{align}
\label{Carl:Identity}
     {\rm Id} = \sum_{k=0}^{\infty} \chi_{k}.
\end{align}
We end this paragraph by recalling the spectral cluster estimates of Sogge \cite{Sogge_cluster,Sogge} that we will need:
\begin{align}
\label{Carl:spclusterEst}
     \|\chi_{k}u\|_{L^{\frac{2n}{n-2}}(M_{0})}&\leq C(1+k)^{\frac{1}{2}-\frac{1}{n}} \|u\|_{L^{2}(M_{0})}, \\ \nonumber
     \|\chi_{k}u\|_{L^{2}(M_{0})} &\leq C(1+k)^{\frac{1}{2}-\frac{1}{n}} \|u\|_{L^{\frac{2n}{n+2}}(M_{0})}.
\end{align}
The first estimate is  given in \cite[Corollary 5.1.2]{Sogge} and the second one is a consequence of the first one by duality.

\begin{proof}[Proof of Proposition \ref{claim_gtau}]
Recall that our main goal is to prove 
\begin{align}
\label{Carl:ConjEst}
     \|u\|_{L^{\frac{2n}{n-2}}(\R \times M_{0})} \leq C_{I} \|f\|_{L^{\frac{2n}{n+2}}(\R \times M_{0})}
\end{align}
when $u \in \D(I \times M_{0})$ and
\begin{align}
\label{Carl:ConjEq}
    D_{x_{1}}^2u+2i\tau D_{x_{1}}u-\Delta_{g_{0}}u-\tau^2 u = f
\end{align}
with $D_{x_{1}}=-i\partial_{x_{1}}$. The inverse operator in \eqref{Carl:ConjEq} is actually easy to write down, as was done in \cite{KSaU}. 
The same procedure appears in \cite{Jerison} and \cite[Section 5.1]{Sogge}. 
Writing $f = \sum_{j=0}^{\infty} \pi_{j}f$ and similarly for $u$, the equation formally becomes 
\begin{align*}
     (D_{x_{1}}^2 + 2i\tau D_{x_{1}} - \tau^2 + \lambda_j) \pi_{j}u = \pi_{j}f
\end{align*}
for $x_1$ on the real line and for $j \geq 0$. The symbol of the operator on the left is $\xi_1^2 + 2 i \tau \xi_1 - \tau^2 + \lambda_j$, and
this is always nonzero provided that $\tau^2 \neq \lambda_j$ for all $j$. Thus, if 
\begin{equation*}
     \tau^2 \notin \text{Spec}(-\Delta_{g_{0}}),
\end{equation*}
an inverse operator may be obtained as 
\begin{equation*}
     \tilde{G}_{\tau} f(x_1,x') =  \sum_{j=0}^{\infty} \int_{-\infty}^{\infty} m_{\tau}\big(x_{1}-y_{1},\sqrt{\lambda_{j}}\big) \,  \pi_{j}f(y_{1},x') \, \dd y_{1} 
\end{equation*}
where 
\begin{equation*}
     m_{\tau}(t,\mu) = \frac{1}{2\pi} \int_{-\infty}^{\infty} \frac{\e^{it\eta}}{\eta^2+2i\tau \eta -\tau^2 + \mu^2} \, \dd\eta, \quad \mu>0.
\end{equation*}

The operator $\tilde{G}_{\tau}$ is the same as $G_{\tau}$ in \cite[Section 4]{KSaU}, except that in the present setting $\{ \psi_j \}$ is a basis of $L^2(M_0)$ on a compact manifold $(M_0,g_0)$ without boundary instead of being a basis of Dirichlet eigenfunctions on a compact manifold with boundary. Let 
$$
L^2_{\delta}(\R \times M_0) = \big\{ f \in L^2_{\text{loc}}(\R \times M_0) \,;\, (1+x_1^2)^{\delta/2} f \in L^2(\R \times M_0) \big\}
$$
and let $H^s_{\delta}(\R \times M_0)$ by the corresponding Sobolev spaces. The proof of \cite[Proposition 4.1]{KSaU} goes through for $\tilde{G}_{\tau}$ without changes and shows that for any fixed $\delta > 1/2$, if $\abs{\tau} \geq 1$ and $\tau^2 \notin \text{Spec}(-\Delta_{g_0})$ then the equation 
$$
\e^{\tau x_1} (-\Delta_g) \e^{-\tau x_1} v = f
$$
has a unique solution $v = \tilde{G}_{\tau} f \in H^1_{-\delta}(\R \times M_0)$ for any $f \in L^2_{\delta}(\R \times M_0)$. Further, $v \in H^2_{-\delta}(\R \times M_0)$ and 
$$
\norm{v}_{H^s_{-\delta}(\R \times M_0)} \leq C_0 \abs{\tau}^{s-1} \norm{f}_{L^2_{\delta}(\R \times M_0)}, \quad 0 \leq s \leq 2.
$$

We define 
$$
G_{\tau} f(x_1,x') = \chi(x_1) \tilde{G}_{\tau} f(x_1,x')
$$
with $\chi \in C^{\infty}_{0}(\R)$ which equals $1$ on $I$. It is then clear that all the statements in the proposition except for the $L^p$ estimate follow from the results for $\tilde{G}_{\tau}$ explained above.

It is sufficient to prove the $L^p$ estimate in the case where $\tau \geq 4$ and $\tau^2 \notin \text{Spec}(-\Delta_{g_0})$. We first record a lemma.

\begin{lemma} \label{lemma:mtau}
     If $\tau > 0$, $\mu > 0$, $\tau \neq \mu$ and $t \in \mR$ then 
     \begin{equation*}
         \abs{m_{\tau}(t,\mu)} \leq \frac{1}{\mu} \e^{-\abs{\tau-\mu} \abs{t}}.
     \end{equation*}
     Besides, if $\tau>0$ then
     \begin{equation*}
         \abs{m_{\tau}(t,0)} \leq |t|\e^{-\tau\abs{t}}.
     \end{equation*}
\end{lemma}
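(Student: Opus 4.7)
The plan is to evaluate $m_{\tau}(t,\mu)$ explicitly using residue calculus, since the integrand is a rational function in $\eta$ times $e^{it\eta}$. First, I would factor the denominator: solving $\eta^2 + 2i\tau\eta - \tau^2 + \mu^2 = 0$ gives the two roots
$$
\eta_{+} = i(\mu - \tau), \qquad \eta_{-} = -i(\mu+\tau),
$$
so the denominator equals $(\eta - \eta_+)(\eta - \eta_-)$. Since $\tau,\mu>0$, the root $\eta_-$ always lies in the open lower half-plane. The root $\eta_+$ lies in the upper half-plane if $\mu>\tau$ and in the lower half-plane if $\mu<\tau$ (the equality case being excluded by hypothesis).

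Next, I would evaluate the integral by contour integration. For $t>0$ one closes the contour by a large semicircle in the upper half-plane, and for $t<0$ in the lower half-plane (with the appropriate orientation); in both cases the contribution from the arc tends to $0$ by Jordan's lemma because the integrand decays like $|\eta|^{-2}$. The residues of $[(\eta-\eta_+)(\eta-\eta_-)]^{-1}e^{it\eta}$ at $\eta_{\pm}$ are
$$
\pm \frac{e^{it\eta_\pm}}{2i\mu}.
$$
Going case-by-case (four cases according to the signs of $t$ and $\mu-\tau$), one obtains explicit expressions: for instance, when $\mu<\tau$ one finds $m_\tau(t,\mu)=0$ for $t>0$ and
$$
m_\tau(t,\mu) = \frac{e^{t(\mu+\tau)} - e^{-t(\mu-\tau)}}{2\mu}
$$
for $t<0$, while when $\mu>\tau$ one gets $m_\tau(t,\mu)=e^{-t(\mu-\tau)}/(2\mu)$ for $t>0$ and $e^{t(\mu+\tau)}/(2\mu)$ for $t<0$. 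The desired bound $|m_\tau(t,\mu)|\leq \mu^{-1} e^{-|\tau-\mu||t|}$ then follows from the triangle inequality together with the trivial observation $\mu+\tau \geq |\tau-\mu|$, which handles the term $e^{t(\mu+\tau)}$.

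The case $\mu=0$ is analogous but slightly different: the denominator becomes $(\eta+i\tau)^2$, producing a double pole at $-i\tau$ in the lower half-plane. For $t>0$, closing in the upper half-plane gives $m_\tau(t,0)=0$; for $t<0$, closing in the lower half-plane, the residue at the double pole equals $\frac{d}{d\eta}e^{it\eta}\big|_{\eta=-i\tau} = it\, e^{t\tau}$, yielding $m_\tau(t,0)= t\, e^{t\tau}$ and hence $|m_\tau(t,0)|= |t|\, e^{-\tau|t|}$.

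There is no real obstacle here: this is a direct residue computation. The only mild subtlety is bookkeeping the orientation of the closed contour and tracking the cases according to whether $\eta_+$ lies above or below the real axis; the final bound is then obtained by combining the exponentials using $\mu+\tau\geq|\tau-\mu|$.
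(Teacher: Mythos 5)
Your residue computation is correct in every case (the roots, the residues $\pm e^{it\eta_\pm}/(2i\mu)$, the four piecewise-exponential formulas, and the double-pole case $\mu=0$ all check out), and it is essentially the same argument as the paper's: the paper's partial-fraction decomposition $\frac{1}{2\mu}\bigl[\frac{1}{i\eta-(\tau+\mu)}-\frac{1}{i\eta-(\tau-\mu)}\bigr]$ combined with the tabulated one-sided transforms of $1/(i\eta+\alpha)$ is exactly your residue decomposition in different notation, and both yield the same explicit expressions before applying $\mu+\tau\geq|\tau-\mu|$. No gap; this is just a presentational variant.
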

\begin{proof}
     This follows by writing 
     \begin{align*} 
         \frac{1}{(i\eta - (\tau+\mu))(i\eta - (\tau-\mu))} = \frac{1}{2\mu} \left[ \frac{1}{i\eta - (\tau+\mu)} - \frac{1}{i\eta - (\tau-\mu)} \right]
     \end{align*}
     and by noting that for $\alpha > 0$ 
     \begin{equation*}
          \mF_{\eta}^{-1} \left\{ \frac{1}{i\eta+\alpha} \right\}(t) = \left\{ \begin{array}{ll} 0, & t < 0 \\ \e^{-\alpha t}, & t > 0, \end{array} \right.
     \end{equation*}
     and similarly for $\alpha < 0$.
     
     Furthermore we have
     \begin{equation*}
          \mF_{\eta}^{-1} \left\{ \frac{1}{(\eta+i\tau)^2} \right\}(t) = \left\{ \begin{array}{ll} t \e^{-\tau |t|}, & t < 0 \\  0, & t > 0, \end{array} \right.
     \end{equation*}
     and this concludes the proof of the lemma.
\end{proof}
From the decomposition \eqref{Carl:Identity}, the spectral cluster estimate \eqref{Carl:spclusterEst}, and the fact that spectral clusters are projections ($\chi_{k}^2=\chi_{k}$),
we get the following string of estimates
\begin{align}
\label{Carl:ClusterLpL2}
     \|u\|_{L^{\frac{2n}{n-2}}(M_{0})} &= \bigg\|\sum_{k=0}^{\infty} \chi^2_{k}u\bigg\|_{L^{\frac{2n}{n-2}}(M_{0})} \\ \nonumber
     &\lesssim \sum_{k=0}^{\infty} (1+k)^{\frac{1}{2}-\frac{1}{n}}  \|\chi_{k}u\|_{L^{2}(M_{0})}.
\end{align}
Since
\begin{align*}
     \|\chi_{k}u\|_{L^{2}(M_{0})} = \bigg(\sum_{k \leq \sqrt{\lambda_{j}}<k+1} |\widehat{u}(j)|^2\bigg)^{\frac{1}{2}}
\end{align*}
if we apply the inequality \eqref{Carl:ClusterLpL2} to $u=G_{\tau}f(x_{1},\cdot)$, we get for almost every $x_1 \in I$ 
\begin{multline*}
     \|G_{\tau}f(x_{1},\cdot)\|_{L^{\frac{2n}{n-2}}(M_{0})}  \lesssim \sum_{k=0}^{\infty} (1+k)^{\frac{1}{2}-\frac{1}{n}}  \\
     \times \bigg(\sum_{k \leq \sqrt{\lambda_{j}}<k+1} \bigg|\int_{-\infty}^{\infty}m_{\tau}\big(x_{1}-y_{1},\sqrt{\lambda_{j}}\big)
     \widehat{f}(y_{1},j)  \, \dd y_{1}\bigg|^2 \bigg)^{\frac{1}{2}}.
\end{multline*}
By Minkowski's inequality, we have
\begin{multline*}
     \|G_{\tau}f(x_{1},\cdot)\|_{L^{\frac{2n}{n-2}}(M_{0})}  \lesssim \sum_{k=0}^{\infty} (1+k)^{\frac{1}{2}-\frac{1}{n}}  \\
     \times \int_{-\infty}^{\infty} \bigg(\sum_{k \leq \sqrt{\lambda_{j}}<k+1} \Big|m_{\tau}\big(x_{1}-y_{1},\sqrt{\lambda_{j}}\big)
     \widehat{f}(y_{1},j)\Big|^2  \bigg)^{\frac{1}{2}} \, \dd y_{1}
\end{multline*}
and since
\begin{align*}
    &\sum_{k \leq \sqrt{\lambda_{j}}<k+1} \Big|m_{\tau}\big(x_{1}-y_{1},\sqrt{\lambda_{j}}\big)
     \widehat{f}(y_{1},j)\Big|^2  \\ &\qquad \quad \leq \sup_{k \leq \sqrt{\lambda_{j}}<k+1} \big|m_{\tau}(x_{1}-y_{1},\sqrt{\lambda_{j}}\big)\big|^2
     \sum_{k \leq \sqrt{\lambda_{j}}<k+1} \big|\widehat{f}(y_{1},j)\big|^2  \\
     &\qquad \quad \leq \sup_{k \leq \sqrt{\lambda_{j}}<k+1} \big|m_{\tau}(x_{1}-y_{1},\sqrt{\lambda_{j}}\big)\big|^2 \times
     \big\|\chi_{k}f(y_{1}, \cdot)\|_{L^2(M_{0})}^2
\end{align*}
using once again the spectral cluster estimate  \eqref{Carl:spclusterEst}, we finally get 
\begin{multline}
\label{Carl:GtauEst}
     \|G_{\tau}f(x_{1},\cdot)\|_{L^{\frac{2n}{n-2}}(M_{0})}  \lesssim \sum_{k=0}^{\infty} (1+k)^{1-\frac{2}{n}}  \\
     \times \int_{-\infty}^{\infty}  \sup_{k \leq \sqrt{\lambda_{j}}<k+1} \big|m_{\tau}(x_{1}-y_{1},\sqrt{\lambda_{j}}\big)\big| \times
     \big\|f(y_{1}, \cdot)\|_{L^{\frac{2n}{n+2}}(M_{0})} \, \dd y_{1}.
\end{multline}
Using Lemma \ref{lemma:mtau}, we estimate
\begin{align*}
     \sup_{k \leq \sqrt{\lambda_{j}}<k+1} \big|m_{\tau}(t,\sqrt{\lambda_{j}}\big)\big|
     \leq \frac{1}{k} \begin{cases} \e^{-(k-\tau) |t|} & \text{ when } \tau < k \\ 1 & \text{ when } k\leq \tau <k+1   
     \\ \e^{-(\tau-k-1)|t|} & \text{ when } \tau \geq k+1 \end{cases}
\end{align*}           
with $k>0$. (Note that when $k=0$, a majorant is $\e^{-(\tau/2)|t|}$ for $\tau \geq 4$). This allows us to estimate the series
\begin{align*}
    \sum_{k=0}^{\infty} (1&+k)^{1-\frac{2}{n}} \sup_{k \leq \sqrt{\lambda_{j}}<k+1} \big|m_{\tau}(t,\sqrt{\lambda_{j}}\big)\big| \\ \nonumber
    &\lesssim  \sum_{1 \leq k \leq \tau-2} k^{-\frac{2}{n}}\e^{-(\tau-k-1) |t|}+ \tau^{-\frac{2}{n}} +  \sum_{ k > \tau+1} k^{-\frac{2}{n}}\e^{-(k-\tau)|t|} + e^{-(\tau/2)\abs{t}} \\ \nonumber
    &\lesssim \int_{0}^{\tau-2} r^{-\frac{2}{n}} \e^{-(\tau-r-2) |t|} \, \dd r + 1 + \int_{\tau}^{\infty} r^{-\frac{2}{n}} \e^{-(r-\tau)|t|} \, \dd r.
\end{align*}
By an obvious change of variables we have
\begin{align*}
    & \int_{0}^{\tau-2} r^{-\frac{2}{n}} \e^{-(\tau-r-2) |t|} \, \dd r + \int_{\tau}^{\infty} r^{-\frac{2}{n}} \e^{-(r-\tau)|t|} \, \dd r \\
    & \qquad \leq 2 |t|^{-1+\frac{2}{n}} \bigg( \int_{0}^1 r^{-\frac{2}{n}} \, \dd r + \int_{1}^{\infty} \e^{-s} \, \dd s\bigg)
     \lesssim |t|^{-1+\frac{2}{n}} 
\end{align*}
whence
\begin{align}
\label{Carl:SeriesEst}
    \sum_{k=0}^{\infty} (1&+k)^{1-\frac{2}{n}} \sup_{k \leq \sqrt{\lambda_{j}}<k+1} \big|m_{\tau}(t,\sqrt{\lambda_{j}}\big)\big|   \lesssim 1+ |t|^{-1+\frac{2}{n}} .
\end{align}
From the estimates \eqref{Carl:GtauEst} and \eqref{Carl:SeriesEst}, we obtain
\begin{align*}
     \|G_{\tau}&f(x_{1},\cdot)\|_{L^{\frac{2n}{n-2}}(M_{0})} \\ & \lesssim \int_{-\infty}^{\infty} \Big(1+|x_{1}-y_{1}|^{-1+\frac{2}{n}}\Big)
     \big\|f(y_{1}, \cdot)\|_{L^{\frac{2n}{n+2}}(M_{0})} \, \dd y_{1}  \\
      &\lesssim \int_{-\infty}^{\infty} |x_{1}-y_{1}|^{-1+\frac{2}{n}} \big\|f(y_{1}, \cdot)\|_{L^{\frac{2n}{n+2}}(M_{0})} \, \dd y_{1}
      + |I|^{\frac{1}{2}-\frac{1}{n}} \|f\|_{L^{\frac{2n}{n+2}}(I \times M_0)}
\end{align*}
and we conclude using the Hardy-Littlewood-Sobolev inequality
\begin{align*}
     \|G_{\tau}f\|_{L^{\frac{2n}{n-2}}(I \times M_{0})}  \lesssim \big\|f\|_{L^{\frac{2n}{n+2}}(I \times M_{0})}.
\end{align*}
This completes the proof of Proposition\ref{claim_gtau}.
\end{proof}
\end{section}
%
%
\begin{section}{Complex geometrical optics} \label{sec_cgo}
In this section we will construct the complex geometrical optics solutions that will be used to recover an $L^{n/2}$ potential. Throughout the section, let $(M,g)$ be a compact manifold with smooth boundary, and let $(M,g) \Subset (T,g)  \Subset (\tilde{T},g)$ where $T = \mR \times M_0$, $\tilde{T} = \mR \times \tilde{M_0}$, and $g = e \oplus g_0$, and $(M_0,g_0) \Subset (\tilde{M}_0,g_0)$ are two $(n-1)$-dimensional simple manifolds. We also assume that $n \geq 3$.

First we state a consequence of Proposition \ref{claim_gtau} for the manifold $M$ (this follows easily by embedding $(\tilde{M}_0,g_0)$ in some compact manifold without boundary and using suitable restrictions and extensions by zero).

\begin{prop} \label{claim_gtau2}
For $\abs{\tau} \geq 4$ outside a countable set, there is a linear operator $G_{\tau}: L^2(M) \to H^2(M)$ such that 
\begin{eqnarray*}
 & \e^{\tau x_1} (-\Delta_g) \e^{-\tau x_1} G_{\tau} v = v & \text{for } v \in L^2(M), \\
 & G_{\tau} \e^{\tau x_1} (-\Delta_g) \e^{-\tau x_1} v = v & \text{for } v \in C^{\infty}_0(M^{\text{int}}).
\end{eqnarray*}
This operator satisfies 
\begin{align*}
\norm{G_{\tau} f}_{L^2(M)} &\leq \frac{C_0}{\abs{\tau}} \norm{f}_{L^2(M)}, \\
\norm{G_{\tau} f}_{H^1(M)} &\leq C_0 \norm{f}_{L^2(M)}, \\
\norm{G_{\tau} f}_{L^{\frac{2n}{n-2}}(M)} &\leq C_0 \norm{f}_{L^{\frac{2n}{n+2}}(M)},
\end{align*}
where $C_0$ is independent of $\tau$.
\end{prop}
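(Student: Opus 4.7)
The plan is to reduce Proposition \ref{claim_gtau2} to Proposition \ref{claim_gtau} by extending the cross-sectional manifold to a closed one. Since $(\tilde M_0, g_0)$ is a compact smooth Riemannian manifold with boundary, there exists a closed (compact, boundaryless) $(n-1)$-dimensional Riemannian manifold $(N_0, \tilde g_0)$ containing $(\tilde M_0, g_0)$ isometrically — for instance by doubling across $\partial \tilde M_0$ — so that $\tilde g_0 = g_0$ on $M_0$. Choose a compact interval $I \subseteq \R$ with $M \Subset I^{\text{int}} \times M_0$ (possible because $M$ is compact in $\R \times M_0$), and set $N = I \times N_0$ equipped with the product metric $g_N = e \oplus \tilde g_0$. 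Applying Proposition \ref{claim_gtau} to $(N_0, \tilde g_0)$ and $I$ produces an operator $\tilde G_\tau : L^2(N) \to H^2(N)$, valid for $\abs{\tau} \geq 4$ outside the countable set $\{\tau : \tau^2 \in \text{Spec}(-\Delta_{\tilde g_0})\}$, satisfying all three norm bounds of Proposition \ref{claim_gtau}.

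I then define $G_\tau f := R \tilde G_\tau E f$, where $E : L^p(M) \to L^p(N)$ denotes extension by zero and $R : L^p(N) \to L^p(M)$ denotes restriction. Because $E$ preserves $L^p$ norms exactly and $R$ does not increase them, the three estimates for $G_\tau$ follow immediately from those for $\tilde G_\tau$. The mapping property $L^2(M) \to H^2(M)$ is inherited from $\tilde G_\tau : L^2(N) \to H^2(N)$ together with the boundedness of restriction $H^2(N) \to H^2(M)$ (which holds since $M \Subset N$).

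For the two identities, the key observation is that since $\tilde g_0 = g_0$ on $M_0$ and $M \Subset I^{\text{int}} \times M_0$, the conjugated Laplacian $e^{\tau x_1}(-\Delta_{g_N})e^{-\tau x_1}$ coincides on $M$ with $e^{\tau x_1}(-\Delta_g)e^{-\tau x_1}$. Moreover, for any $v \in C^\infty_0(M^{\text{int}})$ the zero extension $Ev$ lies in $C^\infty_0(N^{\text{int}})$ (using $M^{\text{int}} \subset I^{\text{int}} \times N_0 = N^{\text{int}}$), and a pointwise check gives
\[
e^{\tau x_1}(-\Delta_{g_N})e^{-\tau x_1}(Ev) = E\bigl(e^{\tau x_1}(-\Delta_g)e^{-\tau x_1} v\bigr),
\]
since both sides agree on $M^{\text{int}}$ and vanish off $\supp v$. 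Applying the two identities of Proposition \ref{claim_gtau} to $Ev$ (for $v \in L^2(M)$ in the first case, and for $v \in C^\infty_0(M^{\text{int}})$ in the second) and restricting to $M$ yields the two required identities for $G_\tau$. I do not anticipate any substantive obstacle; the only thing worth tracking is that the countable exceptional set of $\tau$'s in the statement is precisely the one generated by the discrete spectrum of $-\Delta_{\tilde g_0}$ on the auxiliary closed manifold $N_0$.
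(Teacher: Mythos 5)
Your argument is correct and is exactly the route the paper takes: the paper dispenses with this proposition in a single parenthetical sentence (``this follows easily by embedding $(\tilde{M}_0,g_0)$ in some compact manifold without boundary and using suitable restrictions and extensions by zero''), and you have simply supplied the details of that reduction, including the local nature of the conjugated Laplacian and the provenance of the countable exceptional set. The only cosmetic caveat is that the metric on the double of $\tilde{M}_0$ is in general only Lipschitz across $\partial \tilde{M}_0$, so one should smooth it near the seam; since you only need the closed-manifold metric to agree with $g_0$ on a neighbourhood of $M_0$, which compactly contains the cross-section of $M$, this does not affect the argument.
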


Let us first construct the required complex geometrical optics solutions for the case where no potential is present. This is analogous to \cite[Proposition 5.1]{DKSaU} for $q = 0$.

\begin{prop} \label{prop_cgo_free}
Let $\omega \in \tilde{M}_0 \setminus M_0$ be a fixed point, let $\lambda \in \mR$ be fixed, and let $b \in C^{\infty}(S^{n-2})$ be a function. Write $x = (x_1,r,\theta)$ where $(r,\theta)$ are polar normal coordinates with center $\omega$ in $(\tilde{M}_0,g_0)$. For $\abs{\tau}$ sufficiently large outside a countable set, there exists $u_0 \in H^1(M)$ satisfying 
\begin{align*}
-\Delta_g u_0 &= 0 \ \ \text{in } M, \\ u_0 &= \e^{-\tau x_1}(\e^{-i\tau r} \abs{g}^{-1/4} \e^{i\lambda(x_1+ir)} b(\theta) + r_0)
\end{align*}
where $r_0$ satisfies 
\begin{equation*}
\abs{\tau} \norm{r_0}_{L^2(M)} + \norm{r_0}_{H^1(M)} + \norm{r_0}_{L^{\frac{2n}{n-2}}(M)} \lesssim 1.
\end{equation*}
\end{prop}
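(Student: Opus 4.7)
The plan is to substitute the ansatz $u_0 = \e^{-\tau\rho}\tilde{a} + \e^{-\tau x_1} r_0$ with complex phase $\rho = x_1 + ir$ and amplitude $\tilde{a} = |g|^{-1/4} \e^{i\lambda(x_1+ir)} b(\theta)$. After conjugation by $\e^{\tau x_1}$, the equation $-\Delta_g u_0 = 0$ reduces to
$$
\e^{\tau x_1}(-\Delta_g)\e^{-\tau x_1} r_0 = -\e^{\tau x_1}(-\Delta_g)\bigl(\e^{-\tau\rho}\tilde{a}\bigr).
$$
The strategy is to show that the right-hand side is uniformly bounded in $L^\infty(M)$ with respect to $\tau$ (hence in both $L^{2n/(n+2)}(M)$ and $L^2(M)$), so that the remainder $r_0$ can be produced by applying the inverse operator $G_\tau$ supplied by Proposition \ref{claim_gtau2}. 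The three bounds on $r_0$ then follow directly from the corresponding three estimates on $G_\tau$.

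The heart of the argument is the algebraic identity
$$
\e^{\tau\rho}(-\Delta_g)\bigl(\e^{-\tau\rho}\tilde{a}\bigr) = -\tau^2 \langle\nabla\rho,\nabla\rho\rangle_g \, \tilde{a} + \tau\bigl(2\langle\nabla\rho,\nabla\tilde{a}\rangle_g + \tilde{a}\,\Delta_g\rho\bigr) - \Delta_g\tilde{a},
$$
in which the $\tau^2$ and $\tau$ contributions must vanish identically. The $\tau^2$ term is killed by the complex eikonal equation $\langle\nabla\rho,\nabla\rho\rangle_g = 0$, which follows from $|\nabla x_1|_g = |\nabla r|_{g_0} = 1$ together with the orthogonality of $\partial_{x_1}$ and $\partial_r$ in the product metric $g = e \oplus g_0$. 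For the $\tau$ term I would work in polar normal coordinates $(x_1, r, \theta)$ centered at $\omega$, where $\Delta_g\rho = i\,\partial_r\log\sqrt{|g_0|}$, and verify by a short differentiation that the gauge factor $|g|^{-1/4}$ in $\tilde{a}$ combined with the $\rho$-holomorphy of $\e^{i\lambda(x_1+ir)}$ makes the transport quantity $2\langle\nabla\rho,\nabla\tilde{a}\rangle_g + \tilde{a}\,\Delta_g\rho$ equal to zero.

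Granted this cancellation, the right-hand side above collapses to $\e^{-i\tau r}\Delta_g\tilde{a}$, which is bounded independently of $\tau$ in $L^\infty(M)$: since $\omega \in \tilde{M}_0 \setminus M_0$ and $M \Subset \mR \times M_0$, the coordinate $r$ is bounded away from $0$ on $M$, so $\tilde{a}$ and $\Delta_g \tilde{a}$ are fixed smooth functions there. Taking $|\tau|$ sufficiently large and outside the countable exceptional set furnished by Proposition \ref{claim_gtau2}, we set $r_0 = G_\tau\bigl(\e^{-i\tau r}\Delta_g\tilde{a}\bigr)$; by construction $u_0 \in H^1(M)$ solves $-\Delta_g u_0 = 0$, and the three norm bounds on $G_\tau$ yield $|\tau|\,\|r_0\|_{L^2(M)} + \|r_0\|_{H^1(M)} + \|r_0\|_{L^{2n/(n-2)}(M)} \lesssim 1$, as required.

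The main technical obstacle is the transport-equation verification: while standard in the admissible-geometry CGO literature, producing the identity $2\langle\nabla\rho,\nabla\tilde{a}\rangle_g + \tilde{a}\,\Delta_g\rho = 0$ requires the polar normal form of $g_0$, careful tracking of the $\partial_r |g_0|^{-1/4}$ contribution against $\Delta_{g_0} r = \partial_r\log\sqrt{|g_0|}$, and a small check that the $\partial_{x_1}$ and $i\partial_r$ contributions from the factor $\e^{i\lambda(x_1+ir)}$ cancel against each other thanks to $i\cdot i\lambda = -\lambda$. Every other ingredient is bookkeeping on top of the $L^p$ resolvent $G_\tau$ provided by Proposition \ref{claim_gtau2}.
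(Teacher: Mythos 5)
Your proposal is correct and follows essentially the same route as the paper: both reduce to solving $\e^{\tau x_1}(-\Delta_g)\e^{-\tau x_1} r_0 = f$ with $r_0 = G_\tau f$, where the eikonal and transport cancellations leave $f = \e^{-i\tau r}\Delta_g \tilde{a}$ with $\tau$-independent norm, and the three bounds then come from Proposition \ref{claim_gtau2}. The only difference is that you spell out the WKB/eikonal/transport computation that the paper compresses into the single identity $\norm{f}_{L^2(M)} = \norm{\Delta_g(\e^{i\lambda(x_1+ir)} b(\theta))}_{L^2(M)}$, and your verification (including the role of the $\abs{g}^{-1/4}$ gauge factor against $\Delta_{g_0} r = \partial_r \log\sqrt{\abs{g_0}}$) is accurate.
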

\begin{proof}
The claim follows if one can find $r_0$ satisfying 
\begin{equation*}
\e^{\tau x_1} (-\Delta_g) \e^{-\tau x_1} r_0 = f
\end{equation*}
with the required norm estimates, where 
   $$ f = \e^{\tau x_1} \Delta_g \e^{-\tau x_1}(\e^{-i\tau r} \e^{i\lambda(x_1+ir)} b(\theta)). $$
It is enough to take $r_0 = G_{\tau} f$ and to note that \begin{equation*}
\norm{f}_{L^2(M)} = \norm{\Delta_g(\e^{i\lambda(x_1+ir)} b(\theta))}_{L^2(M)} \lesssim 1.
\end{equation*}
The $L^2$ and $H^1$ estimates follow from Proposition \ref{claim_gtau2}. The $L^{\frac{2n}{n-2}}$ estimate follows from the $H^1$ estimate and Sobolev embedding, or alternatively from the $L^{\frac{2n}{n+2}} \to L^{\frac{2n}{n-2}}$ estimate for $G_{\tau}$.
\end{proof}

We next consider the case with a potential $q \in L^{n/2}(M)$, and try to find a solution to $(-\Delta_g + q)u = 0$ in $M$ of the form 
\begin{equation*}
u = u_0 + \e^{-\tau x_1} r_1.
\end{equation*}
Since $-\Delta_g u_0 = 0$, the function $r_1$ should satisfy 
\begin{equation} \label{r1_equation}
\e^{\tau x_1} (-\Delta_g + q) \e^{-\tau x_1} r_1 = - q \e^{\tau x_1} u_0.
\end{equation}
Since $q$ is only in $L^{n/2}(M)$, here we need to use the $L^{\frac{2n}{n+2}} \to L^{\frac{2n}{n-2}}$ estimates for $G_{\tau}$. We follow the argument of Lavine and Nachman \cite{LN}. It will be convenient to symmetrize the situation slightly. Later on, the $L^n$ functions $m_j$ in the next lemma are chosen to be essentially $\abs{q}^{1/2}$.

\begin{lemma} \label{lemma_m1_gtau_m2}
Let $m_1, m_2 \in L^n(M)$ be two fixed functions. Then for $\abs{\tau} \geq \tau_0$ outside a countable set, 
\begin{equation} \label{m1_gtau_m2_estimate}
\norm{m_1 G_{\tau} m_2 f}_{L^2} \leq C_0 \norm{m_1}_{L^n} \norm{m_2}_{L^n} \norm{f}_{L^2}.
\end{equation}
Further, 
\begin{equation} \label{m1_gtau_m2_convergence_estimate}
\norm{m_1 G_{\tau} m_2 f}_{L^2(M) \to L^2(M)} \to 0
\end{equation}
as $\abs{\tau} \to \infty$.
\end{lemma}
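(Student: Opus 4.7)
The plan is to deduce both estimates directly from the mapping properties of $G_\tau$ in Proposition \ref{claim_gtau2}, viewing $m_1 G_\tau m_2$ as a sandwich operator. For the first assertion, I would use Hölder's inequality twice: since $1/2 + 1/n = (n+2)/(2n)$, multiplication by $m_2 \in L^n(M)$ maps $L^2(M)$ boundedly into $L^{\frac{2n}{n+2}}(M)$ with norm $\|m_2\|_{L^n}$. Symmetrically, since $(n-2)/(2n) + 1/n = 1/2$, multiplication by $m_1 \in L^n(M)$ maps $L^{\frac{2n}{n-2}}(M)$ boundedly into $L^2(M)$ with norm $\|m_1\|_{L^n}$. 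Composing these with the third bound in Proposition \ref{claim_gtau2}, valid whenever $|\tau| \geq 4$ and $\tau^2 \notin \mathrm{Spec}(-\Delta_{g_0})$, yields (\ref{m1_gtau_m2_estimate}) at once, with $C_0$ equal to the constant in Proposition \ref{claim_gtau2}.

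For (\ref{m1_gtau_m2_convergence_estimate}) I would use a density argument exploiting the stronger $L^2 \to L^2$ bound $\|G_\tau f\|_{L^2} \leq (C_0/|\tau|)\|f\|_{L^2}$. Fix $\varepsilon > 0$. Since $L^\infty(M)$ is dense in $L^n(M)$, I can decompose $m_j = \tilde m_j + s_j$ with $\tilde m_j \in L^\infty(M)$ and $\|s_j\|_{L^n(M)} < \varepsilon$, and then split
\[
m_1 G_\tau m_2 \;=\; \tilde m_1 G_\tau \tilde m_2 \;+\; s_1 G_\tau m_2 \;+\; \tilde m_1 G_\tau s_2 .
\]
The last two operators have $L^2 \to L^2$ operator norm $\lesssim \varepsilon(\|m_1\|_{L^n} + \|m_2\|_{L^n} + \|\tilde m_1\|_{L^n})$ uniformly in $\tau$ by the first part of the lemma applied to each piece. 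For the main term, since $\tilde m_1, \tilde m_2 \in L^\infty$, the $L^2$ estimate from Proposition \ref{claim_gtau2} yields
\[
\|\tilde m_1 G_\tau \tilde m_2 f\|_{L^2} \;\leq\; \frac{C_0}{|\tau|}\, \|\tilde m_1\|_{L^\infty} \|\tilde m_2\|_{L^\infty} \|f\|_{L^2},
\]
which tends to $0$ as $|\tau| \to \infty$ (along admissible $\tau$). Letting $\varepsilon \to 0$ concludes the proof.

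The only minor subtlety is the spectral restriction $\tau^2 \notin \mathrm{Spec}(-\Delta_{g_0})$: since the spectrum is discrete, one can always let $|\tau|$ tend to infinity while avoiding it, so neither statement is affected. The genuinely interesting point is the need for the $L^{\frac{2n}{n+2}} \to L^{\frac{2n}{n-2}}$ bound for $G_\tau$ in the first estimate — the $L^2 \to L^2$ bound alone is insufficient because $L^n$ does not act on $L^2$ by multiplication, which is precisely why the $L^p$ Carleman estimate of Proposition \ref{claim_gtau} is indispensable for the $L^{n/2}$ inverse problem.
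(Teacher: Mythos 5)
Your proof is correct and follows essentially the same route as the paper: the first bound via the two Hölder steps sandwiching the $L^{\frac{2n}{n+2}} \to L^{\frac{2n}{n-2}}$ estimate of Proposition \ref{claim_gtau2}, and the second via an $L^\infty$ plus small-$L^n$ splitting of the $m_j$, using the $O(|\tau|^{-1})$ bound on $L^2$ for the bounded part and the first estimate for the remainder terms. The only (immaterial) difference is the grouping of the cross terms in the decomposition of $m_1 G_\tau m_2$.
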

\begin{proof}
The H\"older inequality and Proposition \ref{claim_gtau2} imply that 
\begin{align*}
\norm{m_1 G_{\tau} m_2 f}_{L^2} &\leq \norm{m_1}_{L^n} \norm{G_{\tau} m_2 f}_{L^{\frac{2n}{n-2}}} \leq C_0 \norm{m_1}_{L^n} \norm{m_2 f}_{L^{\frac{2n}{n+2}}} \\
 &\leq C_0 \norm{m_1}_{L^n} \norm{m_2}_{L^n} \norm{f}_{L^2}.
\end{align*}
Let $\eps > 0$ and decompose $m_j = m_j^{\sharp} + m_j^{\flat}$ where $m_j^{\sharp} \in L^{\infty}(M)$,
\begin{align*}
\norm{m_j^{\sharp}}_{L^n} &\leq \norm{m_j}_{L^n} \\ \text{ and } \quad
\norm{m_j^{\flat}}_{L^n} &\leq \frac{\eps}{3 C_0 \max(\norm{m_1}_{L^n}, \norm{m_2}_{L^n})}.
\end{align*}
(One can take for instance $m_j^{\sharp} = m_j \chi_{\{\abs{m_j} \leq \mu\}}$ for large enough $\mu$.) It follows from the $L^2$ estimates for $G_{\tau}$ and \eqref{m1_gtau_m2_estimate} that 
\begin{align*}
\norm{m_1 G_{\tau} m_2 f}_{L^2} &\leq \norm{m_1^{\sharp} G_{\tau} m_2^{\sharp} f}_{L^2} + \norm{m_1^{\sharp} G_{\tau} m_2^{\flat} f}_{L^2} + \norm{m_1^{\flat} G_{\tau} m_2 f}_{L^2} \\
 &\leq \left( \frac{C_0 \norm{m_1^{\sharp}}_{L^{\infty}} \norm{m_2^{\sharp}}_{L^{\infty}}}{\abs{\tau}} + \frac{\eps}{3} + \frac{\eps}{3} \right) \norm{f}_{L^2}.
\end{align*}
The last expression is bounded by $\eps \norm{f}_{L^2}$ if $\abs{\tau}$ is sufficiently large. This proves \eqref{m1_gtau_m2_convergence_estimate}.
\end{proof}

We now finish the construction of complex geometrical optics solutions.

\begin{prop} \label{prop_cgo_main}
Assume that $q \in L^{n/2}(M)$. Let $\omega \in \tilde{M}_0 \setminus M_0$ be a fixed point, let $\lambda \in \mR$ be fixed, and let $b \in C^{\infty}(S^{n-2})$ be a function. Write $x = (x_1,r,\theta)$ where $(r,\theta)$ are polar normal coordinates with center $\omega$ in $(\tilde{M}_0,g_0)$. For $\abs{\tau}$ sufficiently large outside a countable set, there exists a solution $u \in H^1(M)$ of $(-\Delta_g + q) u = 0$ in $M$ of the form 
\begin{equation*}
u = \e^{-\tau x_1}(\e^{-i\tau r} \abs{g}^{-1/4} \e^{i\lambda(x_1+ir)} b(\theta) + \tilde{r})
\end{equation*}
where $\tilde{r}$ satisfies 
\begin{equation*}
\norm{\tilde{r}}_{L^{\frac{2n}{n-2}}(M)} \lesssim 1, \qquad \norm{\tilde{r}}_{L^2(M)} \to 0 \text{ as } \abs{\tau} \to \infty.
\end{equation*}
\end{prop}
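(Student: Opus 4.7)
The plan is to follow the Lavine--Nachman strategy, writing $u = u_0 + \e^{-\tau x_1} r_1$ where $u_0$ is the free CGO from Proposition \ref{prop_cgo_free}, with remainder $r_0$. Then $\tilde r = r_0 + r_1$, and since $-\Delta_g u_0 = 0$, the equation $(-\Delta_g + q)u = 0$ reduces to
\begin{equation*}
\e^{\tau x_1}(-\Delta_g + q)\e^{-\tau x_1} r_1 = -q \e^{\tau x_1} u_0.
\end{equation*}
Factor the potential as $q = m_1 m_2$ with $m_2 = \abs{q}^{1/2}$ and $m_1 = q/m_2$ (extended by $0$ where $q = 0$), so that $m_1, m_2 \in L^n(M)$ with $\norm{m_j}_{L^n} = \norm{q}_{L^{n/2}}^{1/2}$.

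Next, I would seek $r_1$ in the form $r_1 = G_\tau(m_1 w)$ for an unknown $w \in L^2(M)$. By Proposition \ref{claim_gtau2}, this gives $\e^{\tau x_1}(-\Delta_g)\e^{-\tau x_1} r_1 = m_1 w$, so the required equation is implied by
\begin{equation*}
w + m_2 G_\tau m_1 w = -m_2 \e^{\tau x_1} u_0.
\end{equation*}
By Lemma \ref{lemma_m1_gtau_m2}, the operator $m_2 G_\tau m_1$ has $L^2 \to L^2$ norm tending to $0$ as $\abs{\tau} \to \infty$ (outside the countable exceptional set). Hence for $\abs{\tau}$ sufficiently large, $I + m_2 G_\tau m_1$ is invertible on $L^2(M)$ with $\norm{(I + m_2 G_\tau m_1)^{-1}}_{L^2 \to L^2} \leq 2$. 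The right-hand side lies in $L^2$ with norm bounded independently of $\tau$: the principal term $m_2 \e^{-i\tau r}\abs{g}^{-1/4}\e^{i\lambda(x_1+ir)} b(\theta)$ has modulus $\lesssim \abs{m_2}$, giving an $L^2$ bound via $L^n \hookrightarrow L^2$ on the compact manifold $M$, while $\norm{m_2 r_0}_{L^2} \leq \norm{m_2}_{L^n} \norm{r_0}_{L^{2n/(n-2)}} \lesssim 1$ by H\"older and Proposition \ref{prop_cgo_free}. This produces $w \in L^2$ with $\norm{w}_{L^2} \lesssim 1$, and therefore, by Proposition \ref{claim_gtau2},
\begin{equation*}
\norm{r_1}_{L^{2n/(n-2)}} \leq C_0 \norm{m_1 w}_{L^{2n/(n+2)}} \leq C_0 \norm{m_1}_{L^n} \norm{w}_{L^2} \lesssim 1.
\end{equation*}

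The main obstacle is the $L^2$ decay of $r_1$: since $m_1 w$ is only controlled in $L^{2n/(n+2)}$, the $L^2 \to L^2$ bound $C_0/\abs{\tau}$ for $G_\tau$ cannot be applied directly. I would handle this by a splitting argument modeled on the proof of Lemma \ref{lemma_m1_gtau_m2}. Given $\eps > 0$, decompose $m_1 = m_1^{\sharp} + m_1^{\flat}$ with $m_1^{\sharp} \in L^\infty(M)$ and $\norm{m_1^{\flat}}_{L^n} < \eps$. Then
\begin{equation*}
\norm{r_1}_{L^2} \leq \norm{G_\tau(m_1^{\sharp} w)}_{L^2} + \norm{G_\tau(m_1^{\flat} w)}_{L^2}.
\end{equation*}
The first term is bounded by $(C_0/\abs{\tau})\norm{m_1^{\sharp}}_{L^\infty}\norm{w}_{L^2}$ using the $L^2 \to L^2$ estimate, while for the second I use the continuous embedding $L^{2n/(n-2)}(M) \hookrightarrow L^2(M)$ and the $L^{2n/(n+2)} \to L^{2n/(n-2)}$ estimate for $G_\tau$ to bound it by $C|M|^{1/n}\norm{m_1^{\flat}}_{L^n}\norm{w}_{L^2} \lesssim \eps$. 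Letting $\abs{\tau} \to \infty$ and then $\eps \to 0$ gives $\norm{r_1}_{L^2} \to 0$. Combined with $\norm{r_0}_{L^2} \lesssim 1/\abs{\tau}$ from Proposition \ref{prop_cgo_free}, this yields the required decay for $\tilde r = r_0 + r_1$, completing the construction.
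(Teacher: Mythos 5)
Your construction is essentially the paper's own proof: the same ansatz $u = u_0 + \e^{-\tau x_1} r_1$, the same Lavine--Nachman factorization of $q$ into two $L^n$ factors of modulus $\abs{q}^{1/2}$ (you merely swap which factor sits inside $G_\tau$, which changes nothing since only the moduli enter the estimates), the same inversion of $\id + m_2 G_\tau m_1$ via Lemma \ref{lemma_m1_gtau_m2}, and the same $\sharp/\flat$ splitting to get the $L^2$ decay of $r_1$. All of those steps are correct as written.

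However, there is one genuine omission: you never establish that $u \in H^1(M)$, which is part of the statement and is needed for $u$ to be an admissible solution in the integral identity of Lemma \ref{lemma_integral_identity}. Your $r_1 = G_\tau(m_1 w)$ is built from $m_1 w \in L^{2n/(n+2)}(M)$ only, so the mapping properties of $G_\tau$ give you $r_1 \in L^{2n/(n-2)}(M)$ but no derivative control (the $H^1$ and $H^2$ bounds in Proposition \ref{claim_gtau2} require $L^2$ data, which $m_1 w$ need not be). The fix is short but must be said: carry out the whole construction on a slightly larger compact manifold $\hat{M} \supset M$ with $q$ extended by zero, observe that $u \in L^{2n/(n-2)}(\hat{M})$ and $\Delta_g u = qu \in L^{2n/(n+2)}(\hat{M}) \subseteq H^{-1}(\hat{M})$ by H\"older and the dual Sobolev embedding, and invoke interior elliptic regularity to conclude $u \in H^1_{\mathrm{loc}}(\hat{M}^{\mathrm{int}})$, hence $u \in H^1(M)$. (A related point you gloss over, as does the paper, is that the identity $\e^{\tau x_1}(-\Delta_g)\e^{-\tau x_1} G_\tau v = v$ is stated for $v \in L^2$ and must be extended by density to $v \in L^{2n/(n+2)}$ before being applied to $v = m_1 w$.)
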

\begin{proof}
As explained above, we let $u_0$ be the harmonic function given in Proposition \ref{prop_cgo_free}, and look for a solution of the form $u = u_0 + \e^{-\tau x_1} r_1$. We write $q(x) = \abs{q(x)} \e^{i\alpha(x)} = \abs{q(x)}^{1/2} m(x)$ where $m(x) = \abs{q(x)}^{1/2} \e^{i\alpha(x)}$. Then $\abs{q}^{1/2}, m \in L^n(M)$ with $L^n$ norms equal to $\norm{q}_{L^{n/2}}^{1/2}$.

We obtain a solution $u$ provided that \eqref{r1_equation} holds. Trying $r_1$ in the form $r_1 = G_{\tau} \abs{q}^{1/2} v$, we see that $v$ should satisfy 
\begin{equation*}
(\id + m G_{\tau} \abs{q}^{1/2}) v = - m \e^{\tau x_1} u_0.
\end{equation*}
By Lemma \ref{lemma_m1_gtau_m2}, for $\abs{\tau}$ sufficiently large one has $\norm{m G_{\tau} \abs{q}^{1/2}}_{L^2 \to L^2} \leq 1/2$. One then obtains a solution 
\begin{equation*}
v = - (\id + m G_{\tau} \abs{q}^{1/2})^{-1} (m \e^{\tau x_1} u_0).
\end{equation*}
Since $\norm{m \e^{\tau x_1} u_0}_{L^2} \leq \norm{m}_{L^n} \norm{\e^{\tau x_1} u_0}_{L^{\frac{2n}{n-2}}} \lesssim 1$, it follows that $\norm{v}_{L^2} \lesssim 1$. Consequently 
\begin{equation*}
\norm{r_1}_{L^{\frac{2n}{n-2}}} \leq C_0 \norm{\abs{q}^{1/2} v}_{L^{\frac{2n}{n+2}}} \lesssim 1.
\end{equation*}
Now $u$ is of the form given in the statement of the proposition, provided that 
\begin{equation*}
\tilde{r} = r_0 + r_1.
\end{equation*}
This remainder term satisfies $\norm{\tilde{r}}_{L^{\frac{2n}{n-2}}} \lesssim 1$.

To study $\norm{\tilde{r}}_{L^2}$ we fix $\eps > 0$ and make a decomposition $\abs{q}^{1/2} = s^{\sharp} + s^{\flat}$ where $s^{\sharp} \in L^{\infty}(M)$, $\norm{s^{\sharp}}_{L^n} \leq \norm{q}_{L^{n/2}}^{1/2}$, and $\norm{s^{\flat}}_{L^n} \leq \eps$. Then 
\begin{align*}
\norm{r_1}_{L^2} &\leq \norm{G_{\tau} s^{\sharp} v}_{L^2} + C_1 \norm{G_{\tau} s^{\flat} v}_{L^{\frac{2n}{n-2}}} \\
 &\leq \left( \frac{C_0 \norm{s^{\sharp}}_{L^{\infty}}}{\abs{\tau}} + C_0 C_1 \norm{s^{\flat}}_{L^n} \right) \norm{v}_{L^2}.
\end{align*}
Choosing $\abs{\tau}$ sufficiently large, we see that $\norm{r_1}_{L^2} \lesssim \eps$ for $\abs{\tau}$ large. Since also $\norm{r_0}_{L^2(M)} \lesssim \abs{\tau}^{-1}$, it follows that $\norm{\tilde{r}}_{L^2} \to 0$ as $\abs{\tau} \to \infty$.

Finally, to prove that $u \in H^1(M)$, it is enough to consider a compact manifold $(\hat{M},g)$ which is slightly larger than $(M,g)$ and extend $q$ by zero outside $M$, and to perform the above construction of solutions in $\hat{M}$. One obtains a solution $u \in L^{\frac{2n}{n-2}}(\hat{M}) \subseteq L^2(\hat{M})$, and $\Delta_g u = q u \in L^{\frac{2n}{n+2}}(\hat{M}) \subseteq H^{-1}(\hat{M})$ by Sobolev embedding. Elliptic regularity implies that $u \in H^1_{\text{loc}}(\hat{M}^{\text{int}})$, thus also $u \in H^1(M)$.
\end{proof}

\end{section}
%
%
\begin{section}{Recovering the potential} \label{sec_proof_main}

We are now ready to give the proof of the main uniqueness result.

\begin{proof}[Proof of Theorem \ref{theorem_main}]
Assume, as before, that $(M,g) \Subset (T,g) \Subset (\tilde{T},g)$ where $T = \mR \times M_0$, $\tilde{T} = \mR \times \tilde{M}_0$, and $(M_0,g_0) \Subset (\tilde{M}_0,g_0)$ are two $(n-1)$-dimensional simple manifolds where $n \geq 3$. Also assume that $g = e \oplus g_0$.

From the assumption $\Lambda_{g,q_1} = \Lambda_{g,q_2}$, writing $q = q_1-q_2$, we know from Lemma \ref{lemma_integral_identity} that 
\begin{equation} \label{q_integral_identity}
\int_M q u_1 u_2 \, \dd V_g = 0
\end{equation}
where $u_1, u_2 \in H^1(M)$ are solutions of $(-\Delta_g + q_1) u_1 = 0$ in $M$ and $(-\Delta_g + q_2) u_2 = 0$ in $M$. By Proposition \ref{prop_cgo_main}, for $\tau$ sufficiently large outside a countable set there exist solutions $u_j$ of the form 
\begin{align*}
u_1 &= \e^{-\tau (x_1+ir)}(\abs{g}^{-1/4} \e^{i\lambda(x_1+ir)} b(\theta) + r_1), \\
u_2 &= \e^{\tau (x_1+ir)}(\abs{g}^{-1/4} + r_2).
\end{align*}
Here $\lambda$ is a fixed real number, $b \in C^{\infty}(S^{n-2})$ is a fixed function, and $x = (x_1,r,\theta)$ are coordinates in $\tilde{T}$ where $(r,\theta)$ are polar normal coordinates in $(\tilde{M}_0,g_0)$ with center at a fixed point $\omega \in \tilde{M}_0 \setminus M_0$. Also, the remainder terms satisfy 
\begin{equation*}
\norm{r_j}_{L^{\frac{2n}{n-2}}(M)} = O(1), \quad \norm{r_j}_{L^2(M)} = o(1)
\end{equation*}
as $\tau \to \infty$.

Inserting the solutions in \eqref{q_integral_identity} and noting that $\dd V_g = \abs{g}^{1/2} \dd x_1 \,\dd r \,\dd \theta$, we obtain that 
\begin{equation} \label{q_integral_identity_2}
\int_M q \e^{i\lambda(x_1+ir)} b(\theta) \, \dd x_1 \,\dd r \, \dd \theta = \int_M q(a_1 r_2 + a_2 r_1 + r_1 r_2) \, \dd V
\end{equation}
where $a_1, a_2$ are smooth functions in $M$ independent of $\tau$. We show that the right hand side converges to $0$ as $\tau \to \infty$. To do this, fix $\eps > 0$ and write $q = q^{\sharp} + q^{\flat}$ where $q^{\sharp} \in L^{\infty}(M)$, $\norm{q^{\sharp}}_{L^{n/2}} \leq \norm{q}_{L^{n/2}}$, and $\norm{q^{\flat}}_{L^{n/2}} \leq \eps$. The right hand side of \eqref{q_integral_identity_2} is bounded by 
\begin{multline*}
\left\lvert \int_M q(a_1 r_2 + a_2 r_1 + r_1 r_2) \, \dd V \right\rvert \\
\lesssim \norm{q^{\sharp}}_{L^{\infty}}(\norm{r_1}_{L^2} + \norm{r_2}_{L^2} + \norm{r_1}_{L^2} \norm{r_2}_{L^2}) \\
 + \norm{q^{\flat}}_{L^{n/2}} (\norm{r_1}_{L^{\frac{2n}{n-2}}} + \norm{r_2}_{L^{\frac{2n}{n-2}}} + \norm{r_1}_{L^{\frac{2n}{n-2}}} \norm{r_2}_{L^{\frac{2n}{n-2}}}).
\end{multline*}
Using the bounds for $r_j$, if $\tau$ is sufficiently large then the last quantity is $\lesssim \eps$. This shows that the right hand side of \eqref{q_integral_identity_2} goes to $0$ as $\tau \to \infty$.

Extend $q$ by zero into $T$ and interpret the left hand side of \eqref{q_integral_identity_2} as an integral over $T$. Taking the limit as $\tau \to \infty$, we obtain that 
\begin{equation*}
\int_{-\infty}^{\infty} \int_0^{\infty} \int_{S^{n-2}} q(x_1,r,\theta) \e^{i\lambda(x_1+ir)} b(\theta) \,\dd x_1 \,\dd r \, \dd \theta = 0.
\end{equation*}
This statement is true for all choices of $\omega \in \tilde{M}_0 \setminus M_0$, for all real numbers $\lambda$, and for all functions $b \in C^{\infty}(S^{n-2})$. Since $q \in L^1(M)$, Fubini's theorem shows that $q(\,\cdot\,,r,\theta)$ is in $L^1(\mR)$ for a.e.~$(r,\theta)$. Consequently 
\begin{equation} \label{flambda_integrals}
 \int_{S^{n-2}} \int_0^{\infty} f_{\lambda}(r,\theta) \e^{-\lambda r} b(\theta) \,\dd r \,\dd \theta = 0
\end{equation}
where $f_{\lambda} \in L^1(M_0)$ is the function given by 
\begin{equation*}
f_{\lambda}(r,\theta) = \int_{-\infty}^{\infty} \e^{i\lambda x_1} q(x_1,r,\theta) \,\dd x_1.
\end{equation*}
If $\abs{\lambda}$ is sufficiently small, it follows from Lemma \ref{lemma_uniqueness_integrals} below that the vanishing of the integrals \eqref{flambda_integrals} for all choices $\omega$ and $b$ implies that $f_{\lambda} = 0$. Since $q(\,\cdot\,,r,\theta)$ is a compactly supported function in $L^1(\mR)$ for a.e.~$(r,\theta)$, the Paley-Wiener theorem shows that $q(\,\cdot\,,r,\theta) = 0$ for such $(r,\theta)$. Consequently $q_1 = q_2$.
\end{proof}

\section{Attenuated ray transform} \label{sec_raytransform}

It remains to show the following lemma which was used in the proof of Theorem \ref{theorem_main}.

\begin{lemma} \label{lemma_uniqueness_integrals}
Let $(M_0,g_0)$ be an $(n-1)$-dimensional simple manifold, and let $f \in L^1(M_0)$. Consider the integrals 
\begin{equation*}
 \int_{S^{n-2}} \int_0^{\tau(\omega,\theta)} f(r,\theta) \e^{-\lambda r} b(\theta) \,\dd r \,\dd \theta
\end{equation*}
where $(r,\theta)$ are polar normal coordinates in $(M_0,g_0)$ centered at some $\omega \in \partial M_0$, and $\tau(\omega,\theta)$ is the time when the geodesic $r \mapsto (r,\theta)$ exits $M_0$. If $\abs{\lambda}$ is sufficiently small, and if these integrals vanish for all $\omega \in \partial M_0$ and all $b \in C^{\infty}(S^{n-2})$, then $f = 0$.
\end{lemma}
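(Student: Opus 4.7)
The hypothesis can first be recast as the vanishing of an attenuated geodesic ray transform. Set
\begin{equation*}
h_{\omega}(\theta) := \int_{0}^{\tau(\omega,\theta)} f(r,\theta) \e^{-\lambda r} \, \dd r, \quad \omega \in \partial M_{0}, \ \theta \in S^{n-2}.
\end{equation*}
Fubini's theorem shows $h_{\omega} \in L^{1}(S^{n-2})$ for a.e.\ $\omega$, so the vanishing against every $b \in C^{\infty}(S^{n-2})$ forces $h_{\omega} = 0$ a.e.\ on $S^{n-2}$, that is, the attenuated geodesic ray transform $I_{\lambda} f$ (with constant attenuation) vanishes on the incoming sphere bundle $\partial_{+} SM_{0}$.

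Next I would embed $(M_{0}, g_{0})$ in a slightly larger simple manifold $(\tilde{M}_{0}, g_{0})$ with $M_{0} \Subset \tilde{M}_{0}$, and extend $f$ by zero to $\tilde{M}_{0}$. Any maximal geodesic in $\tilde{M}_{0}$ either avoids $M_{0}$ (giving integrand zero) or enters $M_{0}$ through $\partial M_{0}$ at some arc-length parameter $r_{0}$, in which case the attenuated integral factors as $\e^{-\lambda r_{0}}$ times an integral of the type treated in the previous step; hence $I_{\lambda} f \equiv 0$ on all of $\partial_{+} S\tilde{M}_{0}$. Forming the normal operator $N_{\lambda} = I_{\lambda}^{*} I_{\lambda}$ (with a smooth cutoff retaining only geodesics compactly contained in $\tilde{M}_{0}$), one then has $N_{\lambda} f = 0$ on $\tilde{M}_{0}^{\text{int}}$.

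The pseudodifferential analysis carried out in the appendix on the normal operator of the attenuated ray transform identifies $N_{\lambda}$ as an elliptic pseudodifferential operator of order $-1$ on $\tilde{M}_{0}^{\text{int}}$ whose principal symbol is a positive multiple of $|\xi|_{g_{0}}^{-1}$, independent of $\lambda$. Ellipticity upgrades $f \in L^{1}$ to $f \in C^{\infty}(\tilde{M}_{0}^{\text{int}})$; combined with the vanishing of $f$ on the open set $\tilde{M}_{0} \setminus \overline{M_{0}}$, this makes $f$ a smooth function compactly supported in $\tilde{M}_{0}^{\text{int}}$.

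For such $f$ the conclusion $f = 0$ follows by a perturbation argument. At $\lambda = 0$, the geodesic ray transform $I_{0}$ on the simple manifold $\tilde{M}_{0}$ is injective on smooth compactly supported functions by Mukhometov's theorem, so $N_{0}$ is injective on this class and, being an elliptic pseudodifferential operator of order $-1$, induces an isomorphism on the appropriate Sobolev spaces. Since $\e^{-\lambda r} - 1 = O(\lambda)$, the difference $N_{\lambda} - N_{0}$ is a bounded perturbation of size $O(\lambda)$ in the same norms, so $N_{\lambda}$ remains an isomorphism --- and in particular injective --- for $|\lambda|$ sufficiently small. The main technical obstacle in this plan is the pseudodifferential analysis of $N_{\lambda}$, which is why it is separated into the appendix; once that is available, the elliptic regularity upgrade and the perturbative injectivity step are comparatively standard.
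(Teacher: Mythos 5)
Your proposal follows the paper's architecture up to and including the key step: pass by duality to the normal operator, conclude $T_{\lambda}^{*}T_{\lambda}f=0$, and use ellipticity of this order $-1$ operator to upgrade $f\in L^{1}$ to a smooth function compactly supported in the interior of the enlarged simple manifold. One small difference at the start: the paper never asserts that $T_{\lambda}f$ vanishes almost everywhere for the $L^{1}$ function $f$; instead it takes $b(x,\xi)=h(x,\xi)\mu(x,\xi)$, integrates the hypothesis over $\omega\in\partial M_{0}$, and applies the Santal\'o formula to land directly on $\int_{M_0} f\,T_{\lambda}^{*}h\,\dd V=0$, with all interchanges justified by Fubini since $f\in L^{1}$. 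Your route (fix $\omega$, test against all $b$ to get $h_{\omega}=0$ a.e.) reaches the same place but needs the small remark that $h_{\omega}$ is indeed in $L^{1}(S^{n-2})$ for (a.e.) $\omega$, which again comes from Fubini/Santal\'o; this is fine but slightly less economical.

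The genuine divergence is the endgame. Once $f$ is smooth, the paper returns to the original hypothesis, lets $b$ approximate a delta function to get pointwise vanishing of $T_{\lambda}f$ on $\partial_{+}(SM_{0})$, and invokes the known injectivity theorem for the attenuated transform on smooth functions for small $\abs{\lambda}$ (Proposition \ref{prop_raytransform_uniqueness}, i.e.\ \cite[Theorem 7.1]{DKSaU}). You instead propose a perturbation off $\lambda=0$: Mukhometov-type injectivity of $I_{0}$ plus ellipticity of $N_{0}$, then absorb $N_{\lambda}-N_{0}=O(\lambda)$. This is a legitimate alternative (it is essentially the stability argument of \cite{FSU}), but two points need repair. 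First, an injective elliptic pseudodifferential operator of order $-1$ is not thereby an isomorphism onto $H^{1}$; what you actually need is the stability estimate $\norm{f}_{L^{2}}\leq C\norm{N_{0}f}_{H^{1}(\tilde{M}_{0})}$ for $f$ supported in $M_{0}$, obtained by combining the elliptic parametrix estimate $\norm{f}_{L^{2}}\leq C(\norm{N_{0}f}_{H^{1}}+\norm{f}_{H^{-s}})$ with injectivity and compactness of $L^{2}\hookrightarrow H^{-s}$ to remove the error term; only then does $\norm{N_{\lambda}-N_{0}}_{L^{2}\to H^{1}}=O(\lambda)$ give injectivity of $N_{\lambda}$. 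Second, the principal symbol of $N_{\lambda}$ is \emph{not} independent of $\lambda$: the kernel carries the factors $\e^{-\lambda\phi_{\pm}(x,y)}$ with $\phi_{\pm}(x,x)=2\tau(x,\mp\xi)\neq 0$, so the symbol is $\lambda$-dependent (though still positive, hence elliptic, for small $\lambda$ --- this is exactly what the appendix checks). Neither issue is fatal, but as written the final step is not yet a proof.
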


The last result is related to the vanishing of the attenuated geodesic ray transform of the function $f$ on $M_0$. For the following facts see \cite{DPSU}, \cite{PeU}, \cite{Sh}. To define the ray transform, we consider the unit sphere bundle 
$$
SM_0 = \bigcup_{x \in M_0} S_x, \qquad S_x = \big\{ (x,\xi) \in T_x M_0 \,;\, \abs{\xi} = 1\big\}.
$$
This manifold has boundary $\partial (S M_0) = \{ (x,\xi) \in S M_0 \,;\, x \in \partial M_0 \}$ which is the union of the sets of inward and outward pointing vectors, 
$$
\partial_{\pm} (S M_0) = \big\{ (x,\xi) \in S M_0 \,;\, \pm \langle \xi, \nu \rangle \leq 0 \big\}.
$$
Here $\nu$ is the outer unit normal vector to $\partial M_0$. Note that $\partial_+(S M_0)$ is a manifold whose boundary consists of all the tangential directions $\{(x,\xi) \in \partial(S M_0) \,;\, \langle \xi, \nu \rangle = 0 \}$. Thus the space $C^{\infty}_0((\partial_+(S M_0))^{\text{int}})$ contains all smooth functions on $\partial_+(S M_0)$ vanishing near tangential directions.

Denote by $t \mapsto \gamma(t,x,\xi)$ the unit speed geodesic starting at $x$ in direction $\xi$, and let $\tau(x,\xi)$ be the time when this geodesic exits $M_0$. Since $(M_0,g_0)$ is simple, $\tau(x,\xi)$ is finite for each $(x,\xi) \in S M_0$. We also write $\varphi_t(x,\xi) = (\gamma(t,x,\xi), \dot{\gamma}(t,x,\xi))$ for the geodesic flow.

The geodesic ray transform, with constant attenuation $-\lambda$, acts on functions on $M_0$ by 
$$
T_{\lambda} f(x,\xi) = \int_0^{\tau(x,\xi)} f(\gamma(t,x,\xi)) \e^{-\lambda t} \, \dd t, \qquad (x,\xi) \in \partial_+ (S M_0).
$$
In Lemma \ref{lemma_uniqueness_integrals}, if $f$ were a continuous function, one could choose $b(\theta)$ to approximate a delta function at fixed angles $\theta$ and obtain that 
$$
\int_0^{\tau(\omega,\theta)} f(r,\theta) \e^{-\lambda r} \,\dd r =  0
$$
for any $\omega \in \partial M_0$ and any $\theta \in S^{n-2}$. Since $(r,\theta)$ are polar normal coordinates the curves $r \mapsto (r,\theta)$ are geodesics in $(M_0,g_0)$, and this would imply that 
$$
T_{\lambda} f(x,\xi) = 0 \qquad \text{for all } (x,\xi) \in \partial_+(S M_0).
$$
One has the following injectivity result from \cite[Theorem 7.1]{DKSaU}. (If $M_0$ is two-dimensional the smallness assumption on the attenuation coefficient was recently removed in \cite{SaU}.)

\begin{prop} \label{prop_raytransform_uniqueness}
Let $(M_0,g_0)$ be a simple manifold. There exists $\eps > 0$ such that if $\lambda$ is a real number with $\abs{\lambda} < \eps$ and if $f \in C^{\infty}(M)$, then the condition $T_{\lambda} f(x,\xi) = 0$ for all $(x,\xi) \in \partial_+(S M_0)$ implies that $f = 0$.
\end{prop}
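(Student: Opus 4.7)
The plan is to study the attenuated normal operator $N_\lambda := T_\lambda^* T_\lambda$ and show that for small $|\lambda|$ it remains an injective elliptic pseudodifferential operator, since at $\lambda = 0$ it is well known to have these properties on a simple manifold. First, I would embed $(M_0,g_0)$ into an open simple manifold $(\tilde M_0,g_0)$, extend $f$ by zero, and note that the vanishing hypothesis on $T_\lambda f$ translates to $N_\lambda f \equiv 0$ in $M_0^{\text{int}}$. A change of variable from $(t,\xi) \in SM_0$ to $y = \gamma_{x,\xi}(t)$, which is a diffeomorphism because simple manifolds have no conjugate points, shows that $N_\lambda$ is an integral operator with Schwartz kernel
\begin{align*}
K_\lambda(x,y) = \frac{e^{-\lambda d(x,y)}}{d(x,y)^{n-2}}\, a(x,y),
\end{align*}
where $a$ is a strictly positive smooth function on $\tilde M_0 \times \tilde M_0$ arising from the Jacobian of the exponential map.

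The second step is to identify the principal symbol. From the explicit kernel, $N_\lambda$ is a classical pseudodifferential operator of order $-1$, and since $e^{-\lambda d(x,y)} = 1 + O(\lambda\, d(x,y))$ near the diagonal, the principal symbol of $N_\lambda$ coincides with that of $N_0$ for every $\lambda$. In particular $N_\lambda$ is elliptic, inheriting ellipticity from the classical fact (Pestov--Uhlmann, Stefanov--Uhlmann, as used in \cite{DKSaU}) that $N_0$ is elliptic of order $-1$ on simple manifolds. Moreover, at $\lambda = 0$ the operator $N_0$ is injective, since the unattenuated ray transform $I_0$ is injective on $L^2$ on a simple manifold.

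The third step is an analytic Fredholm perturbation argument. Viewing $\lambda \mapsto N_\lambda$ as a real-analytic family of elliptic operators $H^s_{\text{comp}}(\tilde M_0^{\text{int}}) \to H^{s+1}_{\text{loc}}(\tilde M_0^{\text{int}})$, ellipticity furnishes a parametrix realising each $N_\lambda$ as Fredholm modulo a compact operator; the index is locally constant in $\lambda$. Upper semi-continuity of the kernel dimension along such an analytic Fredholm family, together with the injectivity of $N_0$, forces the kernel of $N_\lambda$ to remain trivial for all real $\lambda$ with $|\lambda| < \eps$ for some $\eps > 0$. Combined with the first step, this yields $f = 0$, which is the desired conclusion.

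The main obstacle will be handling the boundary behaviour. A priori $f \in C^\infty(M_0)$ need not vanish on $\partial M_0$, so its zero extension to $\tilde M_0$ is only $L^\infty$ with compact support, and the purely interior pseudodifferential machinery does not directly apply. One therefore needs a preliminary boundary-determination step — exploiting strict convexity of $\partial M_0$ to show that $T_\lambda f(x,\xi) = 0$ at all nearly tangential directions forces $f$ and all its normal derivatives to vanish on $\partial M_0$, so that the zero extension is in fact smooth. It is precisely the smallness of $\lambda$ that allows the perturbative Fredholm step to succeed; removing this restriction, as in \cite{SaU} for $\dim M_0 = 2$, requires entirely different techniques.
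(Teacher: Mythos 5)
The paper does not actually prove this proposition: it is imported wholesale from \cite[Theorem 7.1]{DKSaU}, and Section \ref{sec_raytransform} only uses it as a black box (the paper's own work, in Lemma \ref{lemma_uniqueness_integrals} and Appendix \ref{appendix_normaloperator}, is the reduction of the $L^1$ case to this smooth case). Your proposal supplies a genuine proof, and the strategy --- ellipticity of the normal operator $N_\lambda = T_\lambda^* T_\lambda$, injectivity of $N_0$ on a simple manifold, and perturbation in $\lambda$ --- is the standard one (Stefanov--Uhlmann, \cite{FSU}) and meshes well with the machinery the paper develops in Appendix \ref{appendix_normaloperator} for its other purpose. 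So the route is sound, but three points need attention.

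First, your kernel is not quite right. With the paper's normalization the adjoint carries the weight $\e^{-\lambda \tau(x,-\xi)}$ (Lemma \ref{lemma_raytransform_adjoint}), so the Schwartz kernel of $N_\lambda$ is $(\e^{-\lambda\phi_+}+\e^{-\lambda\phi_-})\,d_{g_0}(x,y)^{2-n}a(x,y)$ with $\phi_\pm = 2\tau(x,\mp\mathrm{grad}_y d_{g_0}(x,y))\pm d_{g_0}(x,y)$, as computed in Appendix \ref{appendix_normaloperator}. The exit-time terms do \emph{not} tend to zero on the diagonal, so the principal symbol of $N_\lambda$ genuinely depends on $\lambda$; your claim that it coincides with that of $N_0$ is false. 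This is harmless --- the symbol depends smoothly on $\lambda$ and is nonvanishing for small $\lambda$, so ellipticity survives --- but the argument should rest on continuity in $\lambda$, not on equality of symbols. Second, and more seriously, the Fredholm packaging hides the crux. Composing with a parametrix gives $QN_\lambda = I + K_\lambda$ with $K_\lambda$ compact, but $\ker N_0 = \{0\}$ does not imply $\ker(I+K_0)=\{0\}$, so upper semicontinuity of $\dim\ker(I+K_\lambda)$ proves nothing about $\ker N_\lambda$. The correct mechanism is the stability estimate: ellipticity gives $\norm{f}_{L^2(M_0)} \leq C(\norm{N_0 f}_{H^1} + \norm{f}_{H^{-1}})$ for $f$ supported in $M_0$; injectivity of $N_0$ plus compactness of $L^2(M_0)\hookrightarrow H^{-1}$ upgrades this to $\norm{f}_{L^2}\leq C'\norm{N_0 f}_{H^1}$; and then $\norm{N_\lambda-N_0}_{L^2\to H^1}=O(\lambda)$ (which follows from the kernel bounds of Appendix \ref{appendix_normaloperator}) gives injectivity for small $\abs{\lambda}$. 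You should run this explicitly rather than appeal to index theory. Third, your ``main obstacle'' is a non-issue: the normal operator and the elliptic estimate apply to any compactly supported $L^2$ (indeed $L^1$) function on the enlarged manifold, so no smoothness of the zero extension is needed --- this is exactly how the paper handles $f\in L^1$ in the proof of Lemma \ref{lemma_uniqueness_integrals} --- and the boundary-determination fix you sketch (recovering all normal derivatives of $f$ at $\partial M_0$ from near-tangential geodesics) is both unnecessary and harder than the problem it is meant to solve.
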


The previous argument together with Proposition \ref{prop_raytransform_uniqueness} proves Lemma \ref{lemma_uniqueness_integrals} for smooth $f$. However, this requires well defined restrictions of $f$ to all geodesics and it is not obvious how to do this when $f \in L^1$. We circumvent this problem by using duality and the ellipticity of the normal operator $T_{\lambda}^* T_{\lambda}$.

We will need a few facts. Below we write
$$
h_{\psi}(x,\xi) = h(\varphi_{-\tau(x,-\xi)}(x,\xi)), \quad (x,\xi) \in S M_0
$$
for $h \in C^{\infty}(\partial_+ (SM_0))$, and 
$$
(h, \tilde{h})_{L^2_{\mu}(\partial_+(S M_0))} = \int_{\partial_+ (S M_0)} h \tilde{h} \mu \,\dd(\partial (S M_0))
$$
where $\mu(x,\xi) = -\langle \xi, \nu(x) \rangle$ and $\dd N$ is the natural Riemannian volume form on a manifold $N$.

\begin{lemma} \label{lemma_santalo_formula}
(Santal\'o formula) If $F: S M_0 \to \R$ is continuous then 
\begin{multline*}
\int_{S M_0} F \,\dd(S M_0) \\ = \int_{\partial_+(S M_0)} \int_0^{\tau(x,\xi)} F(\varphi_t(x,\xi)) \mu(x,\xi) \,\dd t \,\dd(\partial(S M_0))(x,\xi).
\end{multline*}
\end{lemma}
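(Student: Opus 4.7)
The strategy is the standard one: parametrize $SM_0$ (up to a measure-zero set) by boundary points together with flow time, then identify the Jacobian using invariance of the Liouville measure under the geodesic flow.

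First I would introduce the map
$$
\Phi: U \to (SM_0)^{\text{int}}, \qquad \Phi(x,\xi,t) = \varphi_t(x,\xi),
$$
on the open set $U = \{(x,\xi,t) \,:\, (x,\xi) \in (\partial_+(SM_0))^{\text{int}},\ 0 < t < \tau(x,\xi)\}$. Since $(M_0,g_0)$ is simple, every geodesic is defined on a maximal interval ending at the boundary in forward and backward time and has no conjugate points; from this one argues that $\Phi$ is a diffeomorphism onto an open full-measure subset of $SM_0$ (every vector in the interior lies on a unique maximal geodesic which entered $M_0$ at a unique non-tangential inward boundary point at a unique earlier time). The tangential directions $\{\mu = 0\}$ and the trajectories hitting the boundary tangentially form a set of measure zero and can be ignored.

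Next I would compute the Jacobian of $\Phi$ with respect to the natural volume forms. Because the Liouville measure $\dd(SM_0)$ is invariant under the geodesic flow $\varphi_t$, it suffices to compute the Jacobian at $t=0$, i.e.\ to identify the volume form on $\partial(SM_0) \times \R_t$ induced by pulling back $\dd(SM_0)$ along $\Phi$ at points with $t = 0$. At such a point $(x,\xi) \in \partial_+(SM_0)$, decompose the tangent space of $SM_0$ into the tangent space of $\partial(SM_0)$ plus the $\partial_t$-direction $\dot\gamma$. The only non-tangential contribution from $\partial_t$ to the ambient volume comes from the component of $\xi$ along the outer normal $\nu(x)$, which produces exactly the factor $\abs{\langle \xi,\nu\rangle} = -\langle \xi,\nu\rangle = \mu(x,\xi)$ (negative because $(x,\xi) \in \partial_+$). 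Thus
$$
\Phi^* \dd(SM_0) = \mu(x,\xi) \, \dd t \wedge \dd(\partial(SM_0))(x,\xi).
$$

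Finally, I would apply the change of variables: for continuous $F$ on $SM_0$,
\begin{align*}
\int_{SM_0} F\,\dd(SM_0)
&= \int_U (F \circ \Phi)(x,\xi,t)\, \mu(x,\xi)\, \dd t\, \dd(\partial(SM_0))(x,\xi) \\
&= \int_{\partial_+(SM_0)} \int_0^{\tau(x,\xi)} F(\varphi_t(x,\xi))\, \mu(x,\xi)\, \dd t\, \dd(\partial(SM_0))(x,\xi),
\end{align*}
by Fubini. The main technical obstacle is the Jacobian computation at the boundary, namely making precise the splitting of the Liouville volume into a boundary piece and a normal piece with the correct weight $\mu$; once the geometry is set up cleanly (orthonormal frame with $\nu$ as one element, and the flow-invariance argument that reduces everything to $t=0$), this is routine but must be done carefully. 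Treatment of the measure-zero set of glancing directions is a minor additional point handled by continuity of $F$ and by approximating with interior regions $\{\mu > \eps\}$.
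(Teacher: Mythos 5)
Your sketch is correct and is essentially the argument behind the paper's proof, which itself consists only of the citation \cite[Lemma A.8]{DPSU}: that reference proves the formula exactly via the flow-out parametrization $\Phi(x,\xi,t)=\varphi_t(x,\xi)$ together with invariance of the Liouville measure, the boundary weight $\mu(x,\xi)=-\langle\xi,\nu\rangle$ arising as the restriction to $\partial(SM_0)$ of the contraction $\iota_X\,\dd(SM_0)$ of the Liouville form with the geodesic vector field. One cosmetic remark: the fact that $\Phi$ is a diffeomorphism onto a full-measure subset rests on the non-trapping property of simple manifolds and on transversality of the geodesic flow to $\partial(SM_0)$ at non-glancing points (where $\mu\neq 0$), not on the absence of conjugate points, which plays no role here.
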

\begin{proof}
See \cite[Lemma A.8]{DPSU}.
\end{proof}

\begin{lemma} \label{lemma_raytransform_adjoint}
If $f \in C^{\infty}(M_0)$ and $h \in C^{\infty}_0((\partial_+ (S M_0))^{\text{int}})$ then 
$$
(T_{\lambda} f, h)_{L^2_{\mu}(\partial_+(S M_0))} = (f, T_{\lambda}^* h)_{L^2(M_0)}
$$
where 
$$
T_{\lambda}^* h(x) = \int_{S_x} \e^{-\lambda \tau(x,-\xi)} h_{\psi}(x,\xi) \, \dd S_x(\xi), \qquad x \in M_0.
$$
\end{lemma}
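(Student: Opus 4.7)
The plan is to start from the left-hand side, apply Santal\'o's formula to convert the weighted boundary integral into an integral over the full unit sphere bundle $SM_0$, and then read off the stated adjoint by Fubini on the fibers. The main observation powering this is that the two functions appearing inside the transform—namely $f(\gamma(t,x,\xi))$ and $\e^{-\lambda t}h(x,\xi)$—can each be rewritten as values at the point $\varphi_t(x,\xi) \in SM_0$: clearly $f(\gamma(t,x,\xi)) = f(\pi \varphi_t(x,\xi))$ where $\pi$ is the bundle projection, while $h(x,\xi) = h_\psi(\varphi_t(x,\xi))$ (since $h_\psi$ is constant along geodesic flow lines, equal to $h$ at the inflow point), and $t = \tau(\varphi_t(x,\xi), -\dot\gamma(t,x,\xi))$ (the forward exit time from the reversed direction at $\varphi_t(x,\xi)$ equals the time already elapsed from the inflow point).

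Concretely I would unfold
\begin{equation*}
(T_\lambda f, h)_{L^2_\mu(\partial_+(SM_0))} = \int_{\partial_+(SM_0)} \int_0^{\tau(x,\xi)} f(\gamma(t,x,\xi)) \e^{-\lambda t} h(x,\xi)\, \dd t \; \mu(x,\xi)\, \dd(\partial(SM_0))(x,\xi),
\end{equation*}
and then set $F(y,\eta) = f(y) \e^{-\lambda \tau(y,-\eta)} h_\psi(y,\eta)$ on $SM_0$. The three identifications above show that the inner integrand equals $F(\varphi_t(x,\xi))$, so Lemma \ref{lemma_santalo_formula} immediately gives
\begin{equation*}
(T_\lambda f, h)_{L^2_\mu(\partial_+(SM_0))} = \int_{SM_0} f(y) \e^{-\lambda \tau(y,-\eta)} h_\psi(y,\eta) \, \dd(SM_0)(y,\eta).
\end{equation*}
Finally, fiber this last integral as $\int_{M_0} (\cdot)\, \dd V_{g_0}(y)$ with the $\xi$-integral over $S_y$ taken inside, which by definition recovers $\int_{M_0} f(y) T_\lambda^* h(y)\, \dd V_{g_0}(y) = (f, T_\lambda^* h)_{L^2(M_0)}$.

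The only non-routine point is justifying the identities $h(x,\xi) = h_\psi(\varphi_t(x,\xi))$ and $t = \tau(\varphi_t(x,\xi),-\dot\gamma(t,x,\xi))$ for all $t\in[0,\tau(x,\xi)]$, which follow from the definition of $h_\psi$ (extension by constancy along geodesics from $\partial_+(SM_0)$) and from the simplicity of $(M_0,g_0)$, which guarantees unique geodesic exit times and hence the compatibility $\varphi_{-\tau(\varphi_t(x,\xi),-\dot\gamma(t,\cdot))}(\varphi_t(x,\xi)) = (x,\xi)$. The hypothesis $h \in C^\infty_0((\partial_+(SM_0))^{\mathrm{int}})$ ensures that $h_\psi$ is a well-defined continuous function on $SM_0$ (vanishing near glancing directions, where $\tau$ is irregular), so $F$ is continuous and Santal\'o's formula applies without further fuss.
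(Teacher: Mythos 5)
Your argument is correct and follows essentially the same route as the paper: unfold the $L^2_\mu$ pairing, rewrite the integrand as a function of $\varphi_t(x,\xi)$ using exactly the identifications $f(\gamma(t,x,\xi)) = f(\pi\varphi_t(x,\xi))$, $h(x,\xi)=h_\psi(\varphi_t(x,\xi))$ and $t=\tau(\varphi_t(x,\xi),-\dot\gamma(t,x,\xi))$, apply the Santal\'o formula, and then integrate fiberwise. Your explicit justification of the three identifications (and of why $h\in C^\infty_0((\partial_+(SM_0))^{\mathrm{int}})$ makes the integrand continuous so that Lemma \ref{lemma_santalo_formula} applies) is if anything slightly more detailed than the paper's computation.
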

\begin{proof}
By the Santal\'o formula 
\begin{align*}
 (T_{\lambda} f, h)&_{L^2_{\mu}(\partial_+(S M_0))} \\ &= \int_{\partial_+ (S M_0)} \int_0^{\tau(x,\xi)} \e^{-\lambda t} f(\gamma(t,x,\xi)) h \mu \,\dd t \,\dd(\partial(S M_0)) \\
 &= \int_{\partial_+ (S M_0)} \int_0^{\tau(x,\xi)} \e^{-\lambda t} f(\varphi_t(x,\xi)) h_{\psi}(\varphi_t(x,\xi)) \mu \,\dd t \,\dd(\partial(S M_0)) \\
 &= \int_{S M_0} \e^{-\lambda \tau(x,-\xi)} f(x) h_{\psi}(x,\xi) \,\dd(S M_0) \\
 &= \int_{M_0} f(x) \left( \int_{S_x} \e^{-\lambda \tau(x,-\xi)} h_{\psi}(x,\xi) \,\dd S_x(\xi) \right) \,\dd V(x).
\end{align*}
This proves Lemma \ref{lemma_raytransform_adjoint}.
\end{proof}

\begin{lemma} \label{lemma_normal_elliptic}
$T_{\lambda}^* T_{\lambda}$ is a self-adjoint elliptic pseudodifferential operator of order $-1$ in $M_0^{\text{int}}$.
\end{lemma}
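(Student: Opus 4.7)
The plan is to write $T_\lambda^* T_\lambda$ as an integral operator on $M_0$ with an explicit Schwartz kernel, identify the conormal singularity of that kernel at the diagonal, and check that the principal symbol is strictly positive.

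First, I would compose the adjoint formula of Lemma~\ref{lemma_raytransform_adjoint} with the definition of $T_\lambda$. Writing $s=\tau(x,-\xi)$ and $(y,\eta)=\varphi_{-s}(x,\xi)\in\partial_+(SM_0)$, geodesic reversibility gives $\tau(y,\eta)=s+\tau(x,\xi)$ and $\gamma(t,y,\eta)=\gamma(t-s,x,\xi)$; the change of variable $u=t-s$ yields
\begin{equation*}
T_\lambda^* T_\lambda f(x) = \int_{S_x}\int_{-\tau(x,-\xi)}^{\tau(x,\xi)} \e^{-2\lambda\tau(x,-\xi)-\lambda u} f(\gamma(u,x,\xi))\,\dd u\,\dd S_x(\xi).
\end{equation*}
Splitting the $u$-integral at $0$ and folding the negative part through $\xi\mapsto -\xi$, $u\mapsto -u$ (using $\gamma(-u,x,-\xi)=\gamma(u,x,\xi)$) rewrites this as
\begin{equation*}
T_\lambda^* T_\lambda f(x) = \int_{S_x}\int_0^{\tau(x,\xi)}\bigl[\e^{-2\lambda\tau(x,-\xi)-\lambda u}+\e^{-2\lambda\tau(x,\xi)+\lambda u}\bigr] f(\gamma(u,x,\xi))\,\dd u\,\dd S_x(\xi).
\end{equation*}

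Next, since $(M_0,g_0)$ is simple, $\exp_x$ is a diffeomorphism from its maximal domain onto $M_0$, so $(u,\xi)\mapsto y=\exp_x(u\xi)$ provides polar normal coordinates, with $u=d(x,y)$ and $\xi(x,y)=\exp_x^{-1}(y)/d(x,y)$ smooth for $y\neq x$. The volume form becomes $\dd V(y)=J(x,y)\,\dd u\,\dd S_x(\xi)$ with $J(x,y)=d(x,y)^{n-2}\bigl(1+O(d(x,y)^2)\bigr)$ near the diagonal (recall $\dim M_0=n-1$). This puts $T_\lambda^* T_\lambda$ in the form
\begin{equation*}
T_\lambda^* T_\lambda f(x) = \int_{M_0} K_\lambda(x,y) f(y)\,\dd V(y),\qquad K_\lambda(x,y)=\frac{W_\lambda(x,y)}{J(x,y)},
\end{equation*}
with $W_\lambda(x,y)=\e^{-2\lambda\tau(x,-\xi(x,y))-\lambda d(x,y)}+\e^{-2\lambda\tau(x,\xi(x,y))+\lambda d(x,y)}$ smooth for $y\neq x$ and admitting smooth angular limits as $y\to x$.

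Finally, a kernel of this shape --- a singularity of conormal type $d(x,y)^{-(n-2)}$ on the $(n-1)$-dimensional manifold $M_0$, smooth away from the diagonal, with a smooth angular profile at the diagonal --- is precisely the Schwartz kernel of a classical pseudodifferential operator of order $-1$; this is the standard structural result for normal operators of weighted geodesic ray transforms on simple manifolds, as in \cite{Sh} and the treatment in \cite{DKSaU}. The principal symbol at $(x,\eta)\in T^*M_0\setminus 0$ is, up to a dimensional constant, the integral of the angular trace of $W_\lambda$ at the diagonal over the $(n-3)$-sphere $S_x\cap\eta^\perp$, giving
\begin{equation*}
\sigma_{-1}(T_\lambda^* T_\lambda)(x,\eta) = \frac{c_n}{|\eta|_g}\int_{S_x\cap\eta^\perp}\bigl[\e^{-2\lambda\tau(x,-\xi)}+\e^{-2\lambda\tau(x,\xi)}\bigr]\,\dd S(\xi),
\end{equation*}
which is strictly positive; hence $T_\lambda^* T_\lambda$ is elliptic of order $-1$. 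Self-adjointness is immediate from the construction of the adjoint. The only nontrivial step is recognising $K_\lambda$ as the kernel of a pseudodifferential operator with the claimed principal symbol, but the angular-integral formula for the symbol is standard for weighted X-ray transforms, and positivity of the exponential weight then makes ellipticity trivial.
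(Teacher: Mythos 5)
Your derivation of the Schwartz kernel is exactly the one in Appendix B of the paper: the same reversibility/folding manipulations produce the same formula with the weight $\e^{-2\lambda\tau(x,-\xi)}\e^{-\lambda s}+\e^{-2\lambda\tau(x,\xi)}\e^{\lambda s}$, and the same change to geodesic polar coordinates yields the $d_{g_0}(x,y)^{-(n-2)}$ singularity with a smooth, even angular profile at the diagonal. Where you diverge is the final step. The paper does not black-box the implication ``conormal kernel of this type $\Rightarrow$ classical pseudodifferential operator of order $-1$'': it localizes to charts, verifies the estimates $\abs{\d_x^{\alpha}\d_y^{\beta}\tilde K_{\lambda}(x,x-y)}\lesssim \abs{y}^{-n+2-\abs{\beta}}$, and runs a dyadic decomposition with nonstationary phase to place the symbol in $S^{-1}$; it then computes the principal symbol only at $\lambda=0$ (a multiple of $\abs{\xi}_{g_0}^{-1}$) and deduces ellipticity for small $\abs{\lambda}$ by continuity in $\lambda$. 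You instead appeal to the standard structural result for normal operators of weighted X-ray transforms and write the principal symbol directly as the angular integral of the diagonal weight over $S_x\cap\eta^{\perp}$. That is legitimate --- it is precisely \cite[Proposition 2]{FSU}, which the paper itself cites as its primary justification before giving the appendix argument ``for completeness'' --- and it buys you a marginally stronger conclusion: positivity of $\e^{-2\lambda\tau(x,-\xi)}+\e^{-2\lambda\tau(x,\xi)}$ gives ellipticity for every real $\lambda$, not just small $\lambda$ (only small $\lambda$ is needed downstream, since Proposition \ref{prop_raytransform_uniqueness} requires it anyway). The trade-off is that your proof is only as self-contained as the cited structural theorem; to match the appendix you would have to supply the dyadic/oscillatory-integral verification, which is the only genuinely technical content of the lemma.
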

\begin{proof}
This is contained in \cite[Proposition 2]{FSU}, but for completeness we also include a proof in Appendix \ref{appendix_normaloperator}.
\end{proof}

\begin{proof}[Proof of Lemma \ref{lemma_uniqueness_integrals}]
The first step is to extend $(M_0,g_0)$ to a slightly larger simple manifold and to extend $f$ by zero. In this way we can assume that $f$ is compactly supported in $M_0^{\text{int}}$.

We let $b$ also depend on $\omega$ and change notations to write the assumption in the lemma in the form 
$$
\int_{S_x} \int_0^{\tau(x,\xi)} \e^{-\lambda t} f(\gamma(t,x,\xi)) b(x,\xi) \,\dd t \,\dd S_x(\xi) = 0
$$
for all $x \in \partial M_0$ and $b \in C^{\infty}_0((\partial_+(S M_0))^{\text{int}})$. Next we make the choice $b(x,\xi) = h(x,\xi) \mu(x,\xi)$ for $h \in C^{\infty}_0((\partial_+(S M_0))^{\text{int}})$ and integrate the last identity over $\partial M_0$ to obtain 
$$
\int_{\partial_+(S M_0)} \int_0^{\tau(x,\xi)} \e^{-\lambda t} f(\gamma(t,x,\xi)) h(x,\xi) \mu  \,\dd t \,\dd (\partial(S M_0)) = 0.
$$
We are now in the same situation as in the proof of Lemma \ref{lemma_raytransform_adjoint}, and invoking the Santal\'o formula implies 
$$
\int_{M_0} f(x) T_{\lambda}^* h(x) \,\dd V(x) = 0
$$
for all $h \in C^{\infty}_0((\partial_+(S M_0))^{\text{int}})$. Note that the last integral is absolutely convergent because $f \in L^1(M_0)$, and also the previous steps are justified by Fubini's theorem.

It remains to choose $h = T_{\lambda} \varphi$ for $\varphi \in C^{\infty}_0(M_0^{\text{int}})$ to obtain that 
$$
\int_{M_0} f(x) T_{\lambda}^* T_{\lambda} \varphi(x) \,\dd V(x) = 0.
$$
Since $T_{\lambda}^* T_{\lambda}$ is self-adjoint, we have 
$$
\int_{M_0} (T_{\lambda}^* T_{\lambda} f(x)) \varphi(x) \,\dd V(x) = 0.
$$
This is valid for all test functions $\varphi$, so $T_{\lambda}^* T_{\lambda} f = 0$. By ellipticity, since $f$ was compactly supported in $M_0^{\text{int}}$, it follows that $f \in C^{\infty}_0(M_0^{\text{int}})$. One can now use the argument for smooth $f$ given above, together with the injectivity result (Proposition \ref{prop_raytransform_uniqueness}), to conclude the proof that $f = 0$.
\end{proof}

\end{section}
%
%
\appendix
\section{Wellposedness} \label{appendix_wellposedness}

Here we recall the standard arguments that show wellposedness of the Dirichlet problem for $-\Delta_g + q$ on a compact oriented manifold $(M,g)$ with smooth boundary and with $q \in L^{n/2}(M)$, $n \geq 3$. Consider first the inhomogeneous problem for the Schr\"odinger equation,
\begin{equation} \label{inhomogeneous_dirichlet}
(-\Delta_g + q)u = F \text{ in } M, \quad u|_{\partial M} = 0.
\end{equation}
The bilinear form related to this problem is 
\begin{equation*}
B(u,v) = \int_M \big(\langle \dd u, \dd \bar{v} \rangle + qu\bar{v}\big) \, \dd V, \quad u, v \in H^1_0(M),
\end{equation*}
where $\langle \,\cdot\,,\,\cdot\, \rangle$ is the complex-linear inner product of $1$-forms and $\dd V$ is the volume form on $(M,g)$. By the Sobolev embedding $H^1(M) \subseteq L^{\frac{2n}{n-2}}(M)$ and by H\"older's inequality, $B$ is a bounded bilinear form on $H^1_0(M)$. Writing $q = q^{\sharp} + q^{\flat}$ where $q^{\sharp} \in L^{\infty}(M)$ and $\norm{q^{\flat}}_{L^{n/2}(M)}$ is small, we obtain from Poincar\'e's inequality that 
\begin{equation*}
B(u,u) \geq c \norm{u}_{H^1(M)}^2 - C \norm{u}_{L^2(M)}^2, \quad u \in H^1_0(M).
\end{equation*}
This shows that $B + C$ is coercive, and by the Lax-Milgram lemma, compact Sobolev embedding and the Fredholm theorem, the equation \eqref{inhomogeneous_dirichlet} has a unique solution $u \in H^1_0(M)$ for any $F \in H^{-1}(M)$ if one is outside a countable set of eigenvalues.

We can now consider the Dirichlet problem 
\begin{equation} \label{homogeneous_dirichlet2}
(-\Delta_g + q)u = 0 \text{ in } M, \quad u|_{\partial M} = f.
\end{equation}
We assume that $0$ is not a Dirichlet eigenvalue, and it follows from the above discussion that for any $f \in H^{1/2}(\partial M)$ there is a unique solution $u \in H^1(M)$. The DN map is formally defined as the map 
\begin{align*}
\Lambda_{g,q}: H^{1/2}(\partial M) &\to H^{-1/2}(\partial M) \\ f &\mapsto \partial_{\nu} u|_{\partial M}.
\end{align*}
More precisely, if $f \in H^{1/2}(\partial M)$ we define $\Lambda_{g,q} f$ weakly as the function in $H^{-1/2}(\partial M)$ which satisfies 
\begin{equation*}
\int_{\partial M} (\Lambda_{g,q} f) \bar{h} \,\dd S = \int_M \big(\langle \dd u, \dd \bar{v} \rangle + qu \bar{v}\big) \,\dd V
\end{equation*}
where $u$ is the unique solution of \eqref{homogeneous_dirichlet2}, and $v$ is any extension in $H^1(M)$ of $h$ ($v|_{\partial M} = h$). Then $\Lambda_{g,q}$ is a bounded map $H^{1/2}(\partial M) \to H^{-1/2}(\partial M)$ again by H\"older and Sobolev embedding.

The DN map satisfies in the weak sense 
$$
\int_{\partial M} (\Lambda_{g,q} f) \bar{h} \,\dd S = \int_{\partial M} f \overline{\Lambda_{g,\bar{q}} h} \,\dd S.
$$
To see this, let $u, v \in H^1(M)$ solve $(-\Delta_g + q)u = 0$, $u|_{\partial M} = f$ and $(-\Delta_g + \bar{q}) v = 0$, $v|_{\partial M} = h$. Then 
\begin{align*}
 \int_{\partial M} (\Lambda_{g,q} f) \bar{h} \,\dd S &= \int_M \big(\langle \dd u, \dd \bar{v} \rangle + qu \bar{v}\big) \,\dd V 
 \\ &= \overline{\int_M \big(\langle \dd v, \dd \bar{u} \rangle + \bar{q} v \bar{u}\big) \,\dd V} 
 = \int_{\partial M} \overline{(\Lambda_{g,\bar{q}} h)} f \,\dd S.
\end{align*}
As a consequence, we have the basic integral identity used in the uniqueness proof.

\begin{lemma} \label{lemma_integral_identity}
If $q_1, q_2 \in L^{n/2}(M)$ and $\Lambda_{g,q_1} = \Lambda_{g,q_2}$, then 
$$
\int_M (q_1-q_2) u_1 u_2 \,\dd V = 0
$$
for any $u_j \in H^1(M)$ with $(-\Delta_g + q_1) u_1 = 0$ in $M$, $(-\Delta_g + q_2) u_2 = 0$ in $M$.
\end{lemma}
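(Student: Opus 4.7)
The plan is to derive the identity directly from the weak formulation of the DN map that was just set up, without ever needing to pass through smoothness of the solutions. All the ingredients — the bilinear identity characterising $\Lambda_{g,q}$, the fact that an $H^1$ extension of any boundary datum may serve as a test function, and the symmetry of the interior bilinear form — are already in place just above the statement.

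First, set $f_j = u_j|_{\partial M} \in H^{1/2}(\partial M)$. Recall the definition of the DN map: for \emph{any} $H^1(M)$ extension $v$ of a boundary datum $h \in H^{1/2}(\partial M)$,
\begin{equation*}
    \int_{\partial M} (\Lambda_{g,q_1} f_1)\,\bar{h}\,\dd S = \int_M \big(\langle \dd u_1, \dd \bar{v}\rangle + q_1 u_1 \bar{v}\big)\,\dd V.
\end{equation*}
The crucial observation is that $\bar{u}_2 \in H^1(M)$ is an admissible extension of $\bar{f}_2$, \emph{even though} $u_2$ solves the equation with a different potential $q_2$ rather than $q_1$. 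Applying the identity with $h = \bar{f}_2$ and $v = \bar{u}_2$ (so $\bar{v} = u_2$, $\bar{h} = f_2$) yields
\begin{equation*}
    \int_{\partial M} (\Lambda_{g,q_1} f_1)\,f_2\,\dd S = \int_M \big(\langle \dd u_1, \dd u_2\rangle + q_1 u_1 u_2\big)\,\dd V.
\end{equation*}
Interchanging the roles of the two potentials and applying the same reasoning to $\Lambda_{g,q_2} f_2$ tested against $\bar{u}_1$, we obtain
\begin{equation*}
    \int_{\partial M} (\Lambda_{g,q_2} f_2)\,f_1\,\dd S = \int_M \big(\langle \dd u_2, \dd u_1\rangle + q_2 u_1 u_2\big)\,\dd V.
\end{equation*}

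Subtracting these two identities, the kinetic terms $\langle \dd u_1, \dd u_2\rangle$ cancel by symmetry of the complex-bilinear inner product on $1$-forms, and we are left with
\begin{equation*}
    \int_M (q_1-q_2)\,u_1 u_2\,\dd V = \int_{\partial M} \big[(\Lambda_{g,q_1} f_1)\,f_2 - (\Lambda_{g,q_2} f_2)\,f_1\big]\,\dd S.
\end{equation*}
Under the hypothesis $\Lambda_{g,q_1} = \Lambda_{g,q_2}$, the boundary term becomes $\int_{\partial M} [\Lambda_{g,q_1} f_1 \cdot f_2 - \Lambda_{g,q_1} f_2 \cdot f_1]\,\dd S$, which vanishes as soon as the bilinear pairing $(f,h) \mapsto \int_{\partial M} (\Lambda_{g,q_1} f)\,h\,\dd S$ is symmetric. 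But this last symmetry is immediate from the very same identity: applying it within the single potential $q_1$ expresses the pairing as $\int_M(\langle \dd u_f, \dd u_h\rangle + q_1 u_f u_h)\,\dd V$, where $u_f$, $u_h$ solve $(-\Delta_g+q_1)u=0$ with the appropriate boundary data, and this expression is manifestly symmetric in $(f,h)$.

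There is no real obstacle here; the only points requiring minimal care are keeping track of complex conjugates (so as to end up with $u_1u_2$ without a bar) and noting that $u_j$ is used only as an $H^1$ extension of its own trace and therefore need not satisfy the equation associated to the opposite potential. All integrals are absolutely convergent since $u_1 u_2 \in L^{n/(n-2)}(M)$ by Sobolev embedding and $q_1-q_2 \in L^{n/2}(M)$, the exponents being H\"older-dual.
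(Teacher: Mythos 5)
Your argument is correct and is essentially the paper's own proof: both apply the weak definition of the DN map twice with the other solution's conjugate as the $H^1$ test extension, subtract so the gradient terms cancel, and kill the boundary term using a symmetry property of the DN pairing. The only (cosmetic) difference is that you invoke bilinear symmetry of $(f,h)\mapsto\int_{\partial M}(\Lambda_{g,q_1}f)\,h\,\dd S$, whereas the paper uses the equivalent adjoint relation $\int_{\partial M}(\Lambda_{g,q}f)\bar h\,\dd S=\int_{\partial M}f\,\overline{\Lambda_{g,\bar q}h}\,\dd S$ established just before the lemma.
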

\begin{proof}
Follows from the computation 
\begin{align*}
0 &= \int_{\partial M} (\Lambda_{g,q_1}-\Lambda_{g,q_2}) (u_1|_{\partial M}) u_2 \,\dd S \\
& = \int_{\partial M} \big( \Lambda_{g,q_1}(u_1|_{\partial M}) u_2 - u_1 \overline{\Lambda_{g,\bar{q}_2}(\bar{u}_2|_{\partial M})} \big) \,\dd S
\end{align*}
and the definition of the DN maps.
\end{proof}

\section{Normal operator} \label{appendix_normaloperator}

The setting is the compact simple Riemannian manifold $(M_{0},g_{0})$ of dimension $n-1$. 
Let $T_{\lambda}$ be the attenuated ray transform as in Section \ref{sec_raytransform}. We will prove Lemma \ref{lemma_normal_elliptic}. Write 
$$
\psi(x,\xi) = \varphi_{-\tau(x,-\xi)}(x,\xi).
$$
We compute the normal operator $T_{\lambda}^* T_{\lambda} f$ for $f \in C^{\infty}_0(M_0^{\text{int}})$
\begin{align*}
&T_{\lambda}^* T_{\lambda} f(x) \\ &= \int_{S_x} \e^{-\lambda \tau(x,-\xi)} (T_{\lambda} f)_{\psi}(x,\xi) \,\dd S_x(\xi) \\
 &= \int_{S_x} \e^{-\lambda \tau(x,-\xi)} \int_0^{\tau(\psi(x,\xi))} \e^{-\lambda t} f(\gamma(t,\psi(x,\xi))) \,\dd t \,\dd S_x(\xi) \\
 &= \int_{S_x} \e^{-\lambda \tau(x,-\xi)} \int_0^{\tau(x,-\xi)+\tau(x,\xi)} \e^{-\lambda t} f(\gamma(t,\psi(x,\xi))) \,\dd t \,\dd S_x(\xi) \\
\intertext{and using changes of variables we get for the last integral expression}
 &T_{\lambda}^* T_{\lambda} f(x) \\ &= \int_{S_x} \e^{-\lambda \tau(x,-\xi)} \int_{-\tau(x,-\xi)}^{\tau(x,\xi)} \e^{-\lambda (s+\tau(x,-\xi))} 
 f(\gamma(s,x,\xi)) \,\dd s \,\dd S_x(\xi) \\
&= \int_{S_x} \left[ \int_{-\tau(x,-\xi)}^0 + \int_0^{\tau(x,\xi)} \right] \e^{- \lambda (s+2\tau(x,-\xi))} f(\gamma(s,x,\xi)) \,\dd s \,\dd S_x(\xi) \\
&=  \int_{S_x} \int_0^{\tau(x,\xi)} \left[ \e^{-2 \lambda \tau(x,-\xi)} \e^{-\lambda s} + \e^{-2 \lambda \tau(x,\xi)} \e^{\lambda s} \right] f(\gamma(s,x,\xi)) \,\dd s \,\dd S_x(\xi).
\end{align*}
Changing variables $y = \exp_x(s\xi)$ shows that 
$$
T_{\lambda}^* T_{\lambda} f(x) = \int_{M_0} K_{\lambda}(x,y) f(y) \,\dd V(y)
$$
where 
\begin{align*}
K_{\lambda}(x,y) = \frac{(\e^{-\lambda \phi_+(x,y)} +  \e^{-\lambda \phi_-(x,y)}) }{d_{g_0}^{n-2}(x,y)} 
\bigg(\frac{\det g_0(x)}{\det g_0(y)}\bigg)^{\frac{1}{2}} |\det(\exp_x^{-1})'(x,y)|.
\end{align*}
with
     $$ \phi_{\pm} = 2\tau(x,\mp \mathop{\rm grad}\nolimits_yd_{g_0}(x,y)) \pm d_{g_0}(x,y). $$
The functions $\phi_{\pm}$ are smooth away from the diagonal $x=y$, and their $k$-th order derivatives behave as $d_{g_0}(x,y)^{-k}$.
Note that $\det (\exp^{-1})'$ stands for the Jacobian determinant of 
     \begin{align*}
          \exp^{-1} : M_{0} \times M_{0} &\to \R^{n-1} \\
          (x,y) & \mapsto \exp_{x}^{-1}(y).
      \end{align*}
The kernel of the normal operator is symmetric
          $$ K_{\lambda}(y,x) = K_{\lambda}(x,y) $$
and the singular support of this kernel is the diagonal in $M_{0} \times M_{0}$. 

We will now prove that the operator $P_{\lambda}$ with kernel $K_{\lambda}$ is actually a pseudodifferential operator. 
The first observation in that direction is that in coordinates
\begin{align}
\label{Appendix:distance}
       d^2_{g_{0}}(x,y) = a_{jk}(x,y)(x^{j}-y^{j})(x^{k}-y^{k}) 
\end{align}
with $a^{jk}(x,x)=g_{0}^{jk}(x)$. Indeed the square of the distance vanishes at second order and its Hessian at $x=y$ is twice the metric.
This can be seen from the well known formula
     \begin{align*}
         \nabla^2 \phi (y)(\theta,\theta) = \frac{\d^2}{\d t^2} \phi(\exp_y t \theta)\Big|_{t=0}
     \end{align*}
and the fact that if $|\theta|_{g_0}=1$ then $d^2_{g_0}(\exp_y t\theta,y)=t^2$.
To prove that $P_{\lambda}$ is a pseudodifferential operator in $\Psi^{-1}(M^{\rm int})$ we need to show that for any couple of cutoff functions
$(\psi,\chi)$ supported in charts of $M^{\rm int}$, the operator with kernel
        $$ \tilde{K}_{\lambda}(x,y) = \psi(x) K_{\lambda}(x,y) \sqrt{\det g_0(y)}\chi(y) $$
expressed in coordinates%
\footnote{By a slight abuse of notations, to lighten the exposition, we don't write the pullback by the coordinates 
and think of $x$ and $y$ as variables in $\R^{n-1}$.}, 
is a pseudodifferential operator on $\R^{n-1}$ with symbol in $S^{-1}$. Because of its form and of \eqref{Appendix:distance}, the kernel satisfies
\begin{align}
\label{Appendix:KerBehav}
      |\d_{x}^{\alpha}\d_y^{\beta}\tilde{K}_{\lambda}(x,x-y)| &\leq C_{\alpha} |y|^{-n+2-|\beta|}
\end{align}
and has compact support in $\R^{n-1} \times \R^{n-1}$.

Such operators are pseudodifferential operators and this can easily be seen in the following way:
the symbol associated with such an operator is
\begin{align*}
      \tilde{p}_{\lambda}(x,\xi) &= \int \tilde{K}_{\lambda}(x,x-y)\e^{- i y \cdot \xi} \, \dd y 
\end{align*}
For cutoff functions $\psi$ and $\chi$ whose supports don't intersect, the previous symbol is a Schwartz function because the kernel is a smooth 
compactly supported function. So we are only interested in those symbols corresponding to kernels $\tilde{K}_{\lambda}(x,x-y)$ which are 
supported close to $\R^{n-1} \times\{0\}$. 
In that case, we use a dyadic partition of unity
           $$ 1 = \sum_{\mu=-\infty}^{\infty} \chi(2^{-\mu}z), $$
with $\chi$ a function supported in an annulus, to decompose the symbol as a sum of terms of the form
       \begin{align*}
          2^{\mu (n-1)}\int \e^{i 2^{\mu} y\cdot \xi}  \chi(y) \tilde{K}_{\lambda}(x,x-2^{\mu}y) \, \dd y.
     \end{align*}
Note that because of the compact support of the kernel, these terms vanish when $\mu$ is large, so we are mainly concerned with the 
terms where $\mu$ is less than some positive integer, say $N$. Because of the behaviour \eqref{Appendix:KerBehav}, the rescaled kernel
$\tilde{K}_{\lambda}(x,x-2^{\mu}y)$ is uniformly bounded by $2^{-\mu(n-2)}$ as well as all its derivatives.
Applying the non-stationary phase when $|\xi|\geq 1$ and  $2^{\mu}\xi$ is large we get 
     \begin{align*}
          |\tilde{p}_{\lambda}(x,\xi)| &\lesssim   \sum_{\mu \leq N, \, 2^{\mu}|\xi| \geq 1} 2^{\mu} (2^{\mu}|\xi|)^{-N}
          + \sum_{\mu \leq N, \, 2^{\mu}|\xi| \leq 1}  2^{\mu} \\ &\lesssim (1+|\xi|)^{-1}
     \end{align*}
Repeating this argument for the derivatives of this function, we get that $\tilde{p}_{\lambda}$ is a classical symbol of order $-1$.
          
Let us concentrate on $p_0$: we have
     $$ \tilde{K_0}(x,y) =  \psi(x)  \frac{\det(\exp_x^{-1})'(x,y)}{d_{g_0}^{n-2}(x,y)}  \sqrt{\det g_0(y)} \chi(y)   $$
from the previous computation, we see that taking $x=y$ in the nonsingular factors, yields error terms whose kernel are less singular
by an order of $|x-y|$, i.e. errors with symbols of order $(1+|\xi|)^{-2}$. Therefore in terms of the principal symbol, it suffices to compute
\begin{multline*}
      \psi(x) \chi(x) \times \sqrt{\det g_0(x)} \int  \frac{\e^{-i y\cdot \xi} }{|g_0(x)y\cdot y|^{\frac{n-2}{2}}} \, \dd y \\
      = c_n |g_0^{-1}(x)\xi \cdot \xi|^{-\frac{1}{2}} \psi(x) \chi(x).
\end{multline*}
Finally, these observations show that $P_{0}$ has as principal symbol a multiple of 
    $$ |\xi|_{g_{0}}^{-1}=\frac{1}{\sqrt{g_0^{jk}(x)\xi_{j} \xi_{k}}} $$
and since the principal symbol of $P_{\lambda}$ depends smoothly on $\lambda$, it doesn't vanish for $\lambda$ small enough.
This means that for $\lambda$ small enough, $P_{\lambda}$ is an elliptic self-adjoint pseudodifferential operator of order $-1$.
%

%
%

%
%
\end{document}